\documentclass[12pt,draft]{amsart}

\usepackage[foot]{amsaddr}

\usepackage{mathptmx}

\usepackage{amsmath,amssymb,amsthm,cite}

\DeclareMathOperator{\val}{val}

\newtheorem{lemma}{Lemma}

\newtheorem{theorem}{Theorem}

\newtheorem{problem}{Problem}

\begin{document}

\title[Semiring identities in $B_0$]{Semiring identities in the semigroup $B_0$}

\author{Vyacheslav Yu. Shaprynski\v i}\email{vshapr@yandex.ru}

\address{Ural Federal University, Institute of Natural Sciences and Mathematics, 620000, Ekaterinburg, Russia}

\thanks{Supported by Russian Science Foundation, grant No. 23-21-00289.}

\subjclass{08B15, 16Y60, 20M18}

\keywords{Identity, Inverse semigroup, Brandt semigroup, Semiring.}

\begin{abstract}
The semigroup $B_0$ is the only, up to isomorphism, 4-ele\-ment subsemigroup of the 5-element Brandt semigroup $B_2$. Being an inverse semigroup, the semigroup $B_2$ can naturally be considered an additively idempotent semiring and $B_0$ is its subsemi\-ring. We show that the semiring $B_0$ has a finite basis of identities.
\end{abstract}

\maketitle

\section{Introduction and summary}

An \emph{additively idempotent semiring (ai-semiring)} $(S,+,\cdot)$ is an algebra with two binary operations that satisfies the following identities:
\begin{itemize}
\item[1)] $x+y\approx y+x$;
\item[2)] $(x+y)+z\approx x+(y+z)$;
\item[3)] $x+x\approx x$;
\item[4)] $(xy)z\approx x(yz)$;
\item[5)] $x(y+z)\approx xy+xz$;
\item[6)] $(x+y)z\approx xz+yz$.
\end{itemize}
In other words, $(S,+)$ is a semilattice, $(S,\cdot)$ is a semigroup, and multiplication distributes over addition.

Additively idempotent semirings have been attracting significant attention of semigroup theorists recently. In particular, a number of results was dedicated to the finite basis problem in ai-semirings. As examples of earlier results, one can mention the papers~\cite{Dolinka-07,Dolinka-09-1,Dolinka-09-2,Dolinka-09-3} on abstract semirings, \cite{Aceto-03} on tropical semirings, and \cite{Andreka-11} on semirings of binary relations. The recent article~\cite{Jackson-22} contains a number of deep results on the topic, as well as open problems. In the present paper, we consider Problem~7.7 in~\cite{Jackson-22} which deals with inverse semigroups.

Recall that a semigroup $S$ is \emph{inverse} if
$$\forall x\in S\ \exists!x^{-1}\in S\quad xx^{-1}x=x\text{ and }x^{-1}xx^{-1}=x^{-1}.$$
Each inverse semigroup admits a natural partial order
$$x\le y\iff xx^{-1}y=x.$$
If the semigroup is a lower semilattice with respect to this order relation then it can be considered an ai-semiring $(S,\wedge,\cdot)$ with the operation of taking greatest lower bounds as a semiring addition which follows from~\cite[Proposition~1.22]{Schein-73}. Following~\cite{Jackson-22}, we call such semirings \emph{naturally semilat\-tice-ordered inverse semigroups}.

\begin{problem}[{\!\cite[Problem~7.7(3)]{Jackson-22}}]\label{inv sem} 
Which finite naturally semilattice-ordered inverse semigroups are finitely based, in either of the signatures $\{+,\cdot\}$ or $\{+,\cdot,0\}$?
\end{problem}

We consider the signature $\{+,\cdot\}$. An example of a finite inverse semigroup which is nonfinitely based in the semiring signature is the Brandt monoid $B_2^1$. Recall that the Brandt semigroup $B_2$ is defined as the matrix semigroup $B_2=\{0,e_{11},e_{12},e_{21},e_{22}\}$ where
$$e_{11}=\begin{pmatrix}
1&0\\
0&0
\end{pmatrix},\ 
e_{12}=\begin{pmatrix}
0&1\\
0&0
\end{pmatrix},\ 
e_{21}=\begin{pmatrix}
0&0\\
1&0
\end{pmatrix},\ 
e_{22}=\begin{pmatrix}
0&0\\
0&1
\end{pmatrix}.$$

The monoid $B_2^1$ is the result of adjoining an identity element to $B_2$. This monoid is probably the most well-known example of a finite nonfinitely based semigroup~\cite{Perkins-68}. It remains nonfinitely based in the semi\-ring signature which was independently proven in~\cite{Jackson-22} and \cite{Volkov-21}. The semigroup $B_2$ is finitely based in semigroup signature. For a discussion of the history of this result, see~\cite{Volkov-19}. The finite basis problem in semiring signature for $B_2$ remains open. From this moment, we will always consider the semiring signature. 

Importance of the semirings $B_2$ and $B_2^1$ becomes clear within the context of the recent paper~\cite{Gusev-23} where Problem~\ref{inv sem} is considered for the class of finite combinatorial (i.e. not containing non-trivial subgroups) inverse semigroups. The main result of~\cite{Gusev-23} states that a finite combinatorial naturally semilattice-ordered inverse semigroup $S$ is a nonfinitely based ai-semiring whenever the inverse semigroup $B_2^1$ is contained in the variety of inverse semigroups generated by $S$. It follows from this result that the semigroup $B_2$ remains the only, up to equational equivalence, finite combinatorial naturally semilattice-ordered inverse semigroup whose finite basis problem (as an ai-semiring) remains open.

While studying the semiring $B_2$, it is natural to consider its subsemirings as well. All subsemirings of $B_2$ with $\le3$ elements are finitely based~\cite{Zhao-20}. It is easy to see that $B_2$ contains exactly two 4-element subsemirings, namely, $B_2\setminus\{e_{21}\}$ and $B_2\setminus\{e_{12}\}$ and these subsemi\-rings are isomorphic to each other. Therefore, from the point of view of identity bases it remains to consider the semiring $B_0=B_2\setminus\{e_{21}\}$.

\begin{problem}[{\!\cite[Problem~7.7(1)]{Jackson-22}}] Resolve the finite or nonfinite basabi\-lity of $B_0$ and $B_2$ as ai-semirings.
\end{problem}

The aim of the present paper is to prove that the semiring $B_0$ is finitely based. Throughout this text, we are working within the variety of all ai-semirings. Semigroup terms will be called \emph{words} and semiring terms \emph{polynomials}. For any polynomials $\mathbf p$ and $\mathbf q$, we use the notation $\mathbf p\le\mathbf q$ as an abbreviation for the identity $\mathbf p+\mathbf q\approx\mathbf p$. It is evident that this notation corresponds to the additive semilattice order relation. The following theorem is the main result of the article.

\begin{theorem}
\label{main}
The semiring $B_0$ has the following basis of identities within the variety of all ai-semirings:
\begin{align}
\label{periodicity}
&x^2\approx x^3,\\
\label{semigroup identities}
&x^2y^2\approx y^2x^2\approx xyx,\\
\label{square summand}
&x+y^2\approx x^2y^2,\\
\label{squaring eq}
&xy+x\approx xy^2,\ yx+x\approx y^2x,\\
\label{rook monoid}
&xy+zy+zt\le xt,\\
\label{crossing}
&x_1yz_1+x_2yz_2\le x_1yz_2
\end{align}
where $x_1,z_1,x_2,z_2$ can be empty.
\end{theorem}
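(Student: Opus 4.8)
The plan is to establish soundness by a finite computation and then completeness by a normal-form analysis, the latter being the substance. For soundness I would describe $B_0$ concretely: multiplication is $e_{ij}e_{kl}=\delta_{jk}e_{il}$ with $0$ absorbing, while semiring addition is the meet of the natural order, in which $0$ is the least element and $e_{11},e_{12},e_{22}$ are pairwise incomparable atoms; with this picture, checking \eqref{periodicity}--\eqref{crossing} is a routine case analysis. For completeness I must show that every identity valid in $B_0$ follows from \eqref{periodicity}--\eqref{crossing} together with the ai-semiring axioms. Since $(S,+)$ is a semilattice, an identity $\mathbf p\approx\mathbf q$ is equivalent to the conjunction of $\mathbf p\le\mathbf q$ and $\mathbf q\le\mathbf p$; writing each polynomial by distributivity as a sum of words and using that $\sum_i u_i\le\sum_j v_j$ reduces, provably in any ai-semiring, to $\sum_i u_i\le v_j$ for every $j$, it suffices to treat and to derive inequalities $\mathbf p\le w$ whose right-hand side $w$ is a single word.

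Next I would set up word and summand normal forms. Using \eqref{periodicity} to cap exponents and \eqref{semigroup identities} to commute and merge squared letters (via $x^2y^2\approx y^2x^2$ and $xyx\approx x^2y^2$), I would reduce every word, modulo these identities, to a canonical shape, separating a commutative idempotent ``square part'' $\prod_{x\in A}x^2$ over a set $A$ of variables from a square-free ``core'' that records the genuinely nonrepeating skeleton governing evaluation. I would then use \eqref{square summand} and \eqref{squaring eq} to normalize sums: \eqref{squaring eq} merges a word with a one-letter extension ($xy+x\approx xy^2$, $yx+x\approx y^2x$) into a squared form, and \eqref{square summand} ($x+y^2\approx x^2y^2$) absorbs squared variables, collapsing mixed sums to pure products wherever $B_0$ forces equality.

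The structural lemma I would prove first characterizes validity combinatorially by describing the nonzero evaluations of a word. Writing $e_{ij}$ as the pair $(i,j)$, a nonzero value of $w=y_1\cdots y_n$ under an assignment $\phi$ forces a labeling $a_0,a_1,\dots,a_n\in\{1,2\}$ of the gaps with row and column $r(y_k)=a_{k-1}$, $c(y_k)=a_k$; the prohibition of $e_{21}$, i.e.\ $r\le c$ on every letter, forces $a_{k-1}\le a_k$, so the labeling is nondecreasing, and consistency across repeated occurrences restricts which such labelings are admissible. Hence $w$ can evaluate only to $e_{11}$ (all gaps $1$), to $e_{22}$ (all gaps $2$), or to $e_{12}$ (a single jump $1\to2$), and an $e_{12}$-evaluation is exactly a factorization $w=u\,z\,v$ with $z$ a once-occurring variable and $\mathrm{vars}(u),\{z\},\mathrm{vars}(v)$ pairwise disjoint (then $\mathrm{vars}(u)\mapsto e_{11}$, $z\mapsto e_{12}$, $\mathrm{vars}(v)\mapsto e_{22}$). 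Combining this with the meet semantics, $\mathbf p=\sum_i u_i\le w$ holds in $B_0$ if and only if (a) $\mathrm{vars}(w)\subseteq\bigcup_i\mathrm{vars}(u_i)$, forced by the all-$e_{11}$ and all-$e_{22}$ assignments, and (b) every $\phi$ under which all $u_i$ evaluate to $e_{12}$ also sends $w$ to $e_{12}$ --- a purely combinatorial matching condition between the $e_{11}^{*}e_{12}e_{22}^{*}$ cut-patterns of the $u_i$ and that of $w$.

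It then remains to show that each $B_0$-valid $\mathbf p\le w$ is derivable. Condition~(a) is handled by \eqref{square summand}: an extra variable occurrence contributes a square that is absorbed against the $u_i$. The crux is condition~(b), for which I would read \eqref{rook monoid} and \eqref{crossing} as the generators of all valid entailments between cut-patterns: \eqref{crossing} splices two cut-words sharing a pivot letter $y$ into a new cut-word, while \eqref{rook monoid} supplies the composition and transitivity that close the relation, with \eqref{squaring eq} introducing the squares --- later absorbed via \eqref{square summand} --- whenever a peeled boundary letter coincides with an interior one. The heart of the argument, and the step I expect to be hardest, is then a combinatorial induction, on the total length of the $u_i$ or on the number of distinct variables, that peels a boundary letter off $w$, realigns the corresponding pivots in the $u_i$ by \eqref{crossing}, and merges by \eqref{rook monoid}, thereby showing that the semantic matching condition~(b) is not merely implied by but syntactically generated by \eqref{rook monoid} and \eqref{crossing}; that is, every entailment among $e_{12}$-cut-patterns admits a derivation of bounded shape.
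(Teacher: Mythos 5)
Your soundness argument, the reduction of an arbitrary identity to inequalities $\mathbf p\le\mathbf w$ with a single word $\mathbf w$ on the right-hand side, and your semantic characterization of when such an inequality holds in $B_0$ (conditions (a) and (b), together with the description of $e_{12}$-evaluations as factorizations $\mathbf w=\mathbf u z\mathbf v$ with $z$ occurring once and $c(\mathbf u)$, $\{z\}$, $c(\mathbf v)$ pairwise disjoint) are all correct, and they agree with the setup of the paper. The gap is that the proposal stops exactly where the theorem begins: your last paragraph asserts that every $B_0$-valid instance of condition (b) is ``syntactically generated'' by \eqref{rook monoid} and \eqref{crossing}, via an induction that peels a boundary letter off $\mathbf w$, realigns pivots by \eqref{crossing}, and merges by \eqref{rook monoid}. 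That assertion \emph{is} the completeness half of Theorem~\ref{main}; describing the intended shape of a derivation is not an argument that such a derivation exists, and you yourself flag this step as the one you expect to be hardest.

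The reason the sketch cannot be completed as stated is that condition (b) is a universally quantified semantic statement over all assignments, while derivability is syntactic, and the bridge between the two is the entire content of the result. When you peel a letter off $\mathbf w$, you must locate occurrences inside the summands to splice with \eqref{crossing}, and you must know when a letter may legitimately be squared so that \eqref{squaring eq} and \eqref{square summand} can absorb it. Both questions are governed by which letters can occur twice in some word that $\Sigma$-dominates $\mathbf p$ (the paper's \emph{rare} letters) and by the order in which the rare letters are forced to appear. The paper spends most of its length building exactly this structure: the quasiorder $\stackrel{\mathbf p}{\rightarrow}$, defined syntactically via words $\ge_\Sigma\mathbf p$ (which is what makes it usable inside derivations); the fact that for every word $\mathbf w\ge_\Sigma\mathbf p$ the rare letters occurring in $\mathbf w$ form a \emph{maximal} chain (Lemma~\ref{max chain}); and the chain--antichain intersection property (Lemma~\ref{finite poset}). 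Note that this last property fails in general finite posets --- in the four-element poset with $a<c$, $b<c$, $b<d$, the maximal chain $\{b,c\}$ misses the maximal antichain $\{a,d\}$ --- so it is a genuine consequence of \eqref{rook monoid} that must be proved; it is what makes the separating assignments $\val_{\mathbf p,A}$ work and hence makes the invariants recoverable from $B_0$-validity (Lemma~\ref{isomorphism}). Only with all of this in hand does a prefix-peeling induction (Lemma~\ref{final}) close the argument. Nothing in your sketch substitutes for that machinery, so what you have is a correct plan and a correct semantic target, but not a proof.
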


In the statement of the theorem, the phrase ``$x_1,z_1,x_2,z_2$ can be empty'' means that the ``identity''~(\ref{crossing}) is to be understood as a system of 16 identities (some of them are trivial). Note that the identities (\ref{periodicity}) and (\ref{semigroup identities}) constitute an identity basis for the multiplicative semigroup $B_0$ which easily follows from the result of~\cite{Edmunds-80}. The proof of Theorem~\ref{main} is given in the second section.

\section{Proof of Theorem~\ref{main}}

We start with some preliminary facts. Note that addition in the semiring $B_0$ can be defined by the following simple rule:
$$x+y=\begin{cases}
x\text{ if }x=y,\\
0\text{ otherwise.}
\end{cases}$$

The following principle describes multiplication in $B_0$:
$$x_1x_2\cdots x_n=\begin{cases}
e_{11}\text{ if }x_i=e_{11}\text{ for all }i,\\
e_{22}\text{ if }x_i=e_{22}\text{ for all }i,\\
e_{12}\text{ if }x_i=e_{12}\text{ for some }i\text{, }x_j=e_{11}\text{ for all }j<i,\\
\text{and }x_j=e_{22}\text{ for all }j>i,\\
0\text{ otherwise.}
\end{cases}$$

The following facts are evident and will be used without direct reference:
\begin{align*}
&\mathbf p\le\mathbf q\text{ and }\mathbf q\le\mathbf r\text{ imply }\mathbf p\le\mathbf r,\\
&\mathbf p\le\mathbf q\text{ implies }\mathbf p+\mathbf r\le\mathbf q+\mathbf r,\\
&\mathbf p\le\mathbf q\text{ implies }\mathbf p\mathbf r\le\mathbf q\mathbf r,\\
&\mathbf p\le\mathbf q\text{ implies }\mathbf r\mathbf p\le\mathbf r\mathbf q
\end{align*}
where $\mathbf p,\mathbf q,\mathbf r$ are arbitrary polynomials.

For convenience of references, note the following obvious principle:
\begin{equation}
\label{adding inequalities}\mathbf p\le\mathbf q\text{ and }\mathbf p\le\mathbf  r\text{ imply }\mathbf p\le\mathbf q+\mathbf r.
\end{equation}

The identity~(\ref{squaring eq}) implies
\begin{equation}
\label{deleting squares} xy^2\le x,\ y^2x\le x.
\end{equation}

By $\Sigma$ we denote the system of identities~(\ref{periodicity})--(\ref{crossing}).

\begin{lemma} 
\label{sigma holds in b0} The semiring $B_0$ satisfies the system $\Sigma$.
\end{lemma}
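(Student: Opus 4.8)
The plan is to verify that each identity in the system $\Sigma$ holds in $B_0$ by direct substitution, exploiting the explicit description of addition and multiplication given above. Since $B_0$ has only four elements $\{0,e_{11},e_{12},e_{22}\}$, each identity is a finite check, but the presence of several identities (six displayed lines, with the last two being systems) suggests organizing the verification rather than brute-forcing all assignments. I would first record two governing observations: addition is the ``equal-or-zero'' operation, so an additive identity $\mathbf p\le\mathbf q$ (that is, $\mathbf p+\mathbf q\approx\mathbf p$) holds if and only if, for every assignment of variables, either the values of $\mathbf p$ and $\mathbf q$ coincide or the value of $\mathbf p$ is $0$; and multiplication is governed by the four-case principle above, so a product is nonzero only in the very restricted ``staircase'' pattern (a block of $e_{11}$'s, then possibly one $e_{12}$, then a block of $e_{22}$'s, or an all-$e_{11}$ or all-$e_{22}$ word).

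Next I would dispatch the purely multiplicative identities. For~\eqref{periodicity} and~\eqref{semigroup identities}, these are exactly the semigroup identities of the multiplicative reduct $B_0$, whose validity is already noted in the excerpt (following from~\cite{Edmunds-80}); I would nonetheless confirm $x^2\approx x^3$ and $x^2y^2\approx y^2x^2\approx xyx$ against the product principle, checking that $x^2$ is idempotent-stable and that both $x^2y^2$ and $xyx$ evaluate to $0$ unless $x=y$ is idempotent. For the mixed identities~\eqref{square summand} and~\eqref{squaring eq}, I would use the addition rule: each reduces to comparing two product values and confirming they either agree or force the left side to $0$. For instance, in~\eqref{square summand} one checks that $x+y^2$ equals $x^2y^2$ by noting $y^2\in\{0,e_{11},e_{22}\}$ and splitting on whether $x$ equals $y^2$.

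The main work, and the expected obstacle, lies in the two inequality systems~\eqref{rook monoid} and~\eqref{crossing}, since each is really a family of identities (with $x_1,z_1,x_2,z_2$ allowed to be empty, giving the $16$ cases for~\eqref{crossing}). The strategy here is to show that whenever the left-hand side is nonzero, the relevant summands force a rigid staircase structure on the variable values, and that this structure makes the right-hand side equal to the left-hand side. For~\eqref{rook monoid}, a sum $xy+zy+zt$ is nonzero only if all three products take the same nonzero value, which pins down the values of $x,y,z,t$ tightly enough (via the $e_{11}/e_{12}/e_{22}$ positional constraints) to force $xt$ to take that same value. For~\eqref{crossing}, the key point is that two products $x_1yz_1$ and $x_2yz_2$ sharing the common middle factor $y$ and summing to something nonzero must agree, which by the staircase principle forces $x_1yz_2$ to agree as well; I would handle the empty-factor cases uniformly by treating an empty word as the multiplicative identity in the relevant positional analysis, rather than enumerating all sixteen instances separately.

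My plan is therefore to present the verification compactly: state the two governing principles (the additive ``$\mathbf p\le\mathbf q$ holds iff $\mathbf p=\mathbf q$ or $\mathbf p=0$ under every assignment'' and the multiplicative staircase principle), then for each displayed identity reduce the claim to a short case analysis driven by which variables receive idempotent values and where any $e_{12}$ sits. The inequalities~\eqref{rook monoid} and~\eqref{crossing} are the crux, and I expect the cleanest route is the contrapositive: assume the left side is nonzero, deduce the common value and the forced variable assignment, and read off that the right side takes that same value.
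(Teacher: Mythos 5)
Your proposal is correct and follows essentially the same route as the paper's proof: check the one- and two-variable identities \eqref{periodicity}--\eqref{squaring eq} directly, and for the inequality families \eqref{rook monoid} and \eqref{crossing} reduce to the case where the left-hand side is nonzero (in fact, to the value $e_{12}$, since the values $e_{11}$ and $e_{22}$ force all variables to be that value) and use the staircase structure of products to read off that the right-hand side takes the same value. Your handling of empty factors as a formal identity in the positional analysis matches the paper's ``take the value $e_{11}$/$e_{22}$ or are empty'' case split, so no gap remains.
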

\begin{proof} The identities~(\ref{periodicity})--(\ref{squaring eq}) contain just one or two variables, so they can be checked by enumeration of all values of variables. Consider the left hand side $xy+zy+zt$ of the identity~(\ref{rook monoid}). It is evident that a polynomial takes the value $e_{11}$ or $e_{22}$ if and only if all its variables take this value. Therefore, if the value of $xy+zy+zt$ is $e_{11}$ or $e_{22}$ then the value of $xt$ is the same. If $xy+zy+zt$ takes the value $0$ then the identity is trivial because $0$ is the zero element of the additive semilattice. It remains to assume that the value of $xy+zy+zt$ is $e_{12}$. Therefore, each summand $xy$, $zy$, and $zt$ should take the value $e_{12}$. This is possible in the following cases: 
\begin{itemize}
\item[a)] $x$ and $z$ take the value $e_{12}$; $y$ and $t$ take the value $e_{22}$;
\item[b)] $x$ and $z$ take the value $e_{11}$; $y$ and $t$ take the value $e_{12}$.
\end{itemize}
In both cases, the right hand side $xt$ also takes the value $e_{12}$. It remains to check~(\ref{crossing}). As with the identity~(\ref{rook monoid}), verification can be reduced to the case where the left hand side $x_1yz_1+x_2yz_2$ takes the value $e_{12}$. This is possible in the following cases:
\begin{itemize}
\item[a)] $x_1$ and $x_2$ take the value $e_{12}$; $y$, $z_1$ and $z_2$ take the value $e_{22}$ or are empty; 
\item[b)] $z_1$ and $z_2$ take the value $e_{12}$; $y$, $x_1$ and $x_2$ take the value $e_{11}$ or are empty; 
\item[c)] $y$ takes the value $e_{12}$; $x_1$ and $x_2$ take the value $e_{11}$ or are empty; $z_1$ and $z_2$ take the value $e_{22}$ or are empty. 
\end{itemize}
In all these cases, the right hand side $x_1yz_2$ also takes the value $e_{12}$.
\end{proof}

We write $\mathbf p\approx_{\Sigma}\mathbf q$ [$\mathbf p\le_\Sigma\mathbf q$] if the identity $\mathbf p\approx\mathbf q$ [$\mathbf p\le\mathbf q$] follows from $\Sigma$ within the variety of all ai-semirings. For a polynomial $\mathbf p$, we denote by $c(\mathbf p)$ the \emph{content} of $\mathbf p$, i.e. the set of all letters which occur in $\mathbf p$. For any two letters $x,y\in c(\mathbf p)$, we write $x\stackrel{\mathbf p}{\rightarrow}y$ if either $x$ coincides with $y$ or there exists a word $\mathbf w\ge_{\Sigma}\mathbf p$ such that at least one occurrence of $x$ precedes at least one occurrence of $y$ in $\mathbf w$. In this case, we have $\mathbf w=\mathbf w_1x\mathbf w_2y\mathbf w_3$ where the words $\mathbf w_1,\mathbf w_2,\mathbf w_3$ can be empty. We say that a letter $x\in c(\mathbf p)$ is \emph{rare in} $\mathbf p$ if, for each word $\mathbf w\ge_{\Sigma}\mathbf p$, the letter $x$ occurs in $\mathbf w$ at most once. This means that there is no word $\mathbf w_1x\mathbf w_2x\mathbf w_3\ge_{\Sigma}\mathbf p$, where $\mathbf w_1,\mathbf w_2,\mathbf w_3$ can be empty. As an example, consider the polynomial $\mathbf p=x^2y+yx^2$. We have $x\stackrel{\mathbf p}{\rightarrow}y$ and $y\stackrel{\mathbf p}{\rightarrow}x$ since $\mathbf p\le_{\Sigma}x^2y$ and $\mathbf p\le_{\Sigma}yx^2$ respectively. The letter $x$ is not rare in $\mathbf p$ since $\mathbf p\le_{\Sigma}x^2y$. Neither is the letter $y$ rare in $\mathbf p$ because $\mathbf p=x^2y+yx^2\le_{\Sigma}x^2yx^2\approx_{\Sigma}x^4y^2$ by~(\ref{crossing}) and~(\ref{semigroup identities}). In the further text, we implicitly assume that $\mathbf w_1,\mathbf w_2,\mathbf w_3$ are allowed to be empty each time we refer to the definition of the relation $\stackrel{\mathbf p}{\rightarrow}$ or to the definition of a rare letter. The similar agreement will be meant each time we consider an occurrence of a letter $x$ in a word $\mathbf w_1x\mathbf w_2$. The set of all rare letters in a polynomial $\mathbf p$ will be denoted by $R(\mathbf p)$. 

\begin{lemma}
\label{order} For any polynomial $\mathbf p$, the following holds.
\begin{itemize} 
\item[\textup{1)}] The relation $\stackrel{\mathbf p}{\rightarrow}$ is a quasiorder relation on $c(\mathbf p)$ (i.e. is reflexive and transitive).
\item[\textup{2)}] If $x\stackrel{\mathbf p}{\rightarrow}y$ and $y\stackrel{\mathbf p}{\rightarrow}x$ for some $x\in R(\mathbf p)$, $y\in c(\mathbf p)$, then $x=y$. In particular, the relation ${\stackrel{\mathbf p}{\rightarrow}}|_{R(\mathbf p)}$ is an order relation on $R(\mathbf p)$.
\end{itemize}
\end{lemma}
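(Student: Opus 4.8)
The plan is to establish the two parts of Lemma~\ref{order} by directly unwinding the definition of the relation $\stackrel{\mathbf p}{\rightarrow}$ and exploiting the deduction rules of $\Sigma$. For part~1), reflexivity is immediate, since the definition explicitly declares $x\stackrel{\mathbf p}{\rightarrow}x$. The work is in transitivity. Suppose $x\stackrel{\mathbf p}{\rightarrow}y$ and $y\stackrel{\mathbf p}{\rightarrow}z$ with all three letters distinct. Then there are words $\mathbf u=\mathbf u_1x\mathbf u_2y\mathbf u_3\ge_\Sigma\mathbf p$ and $\mathbf v=\mathbf v_1y\mathbf v_2z\mathbf v_3\ge_\Sigma\mathbf p$. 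The natural move is to form the sum $\mathbf u+\mathbf v$, which still satisfies $\mathbf u+\mathbf v\ge_\Sigma\mathbf p$ by the monotonicity of $+$, and then to apply the crossing identity~(\ref{crossing}) to splice the $x$-before-$y$ segment of $\mathbf u$ with the $y$-before-$z$ segment of $\mathbf v$ along the common letter $y$. Concretely, I would try to match the two summands to the template $x_1yz_1+x_2yz_2\le x_1yz_2$ by identifying the factors to the left of $y$ as the $x_i$-blocks and the factors to the right as the $z_i$-blocks; the conclusion $x_1yz_2$ then places an occurrence of $x$ before $y$ before $z$, witnessing $x\stackrel{\mathbf p}{\rightarrow}z$. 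The main obstacle here is bookkeeping: the occurrences of $x,y,z$ need not be at the extreme positions of their respective words, so I may first have to isolate a single $x$-before-$y$ pattern in $\mathbf u$ and a single $y$-before-$z$ pattern in $\mathbf v$ before the crossing identity applies cleanly.

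For part~2), suppose $x\in R(\mathbf p)$, $y\in c(\mathbf p)$, and both $x\stackrel{\mathbf p}{\rightarrow}y$ and $y\stackrel{\mathbf p}{\rightarrow}x$ hold, yet $x\ne y$. Then there exist words $\mathbf w\ge_\Sigma\mathbf p$ in which $x$ precedes $y$ and $\mathbf w'\ge_\Sigma\mathbf p$ in which $y$ precedes $x$. The strategy is to derive from these two orderings a single word $\ge_\Sigma\mathbf p$ containing \emph{two} occurrences of $x$, which would contradict the rareness of $x$. The key lever is again~(\ref{crossing}): from a word where $x$ comes before $y$ and a word where $y$ comes before $x$, I expect to splice so as to produce a pattern $\cdots x\cdots y\cdots x\cdots$, i.e. a word of the form $\mathbf w_1x\mathbf w_2x\mathbf w_3\ge_\Sigma\mathbf p$. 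This directly violates the defining property of a rare letter, forcing $x=y$. Once antisymmetry is established on $R(\mathbf p)$, combining it with the quasiorder property from part~1) immediately yields that ${\stackrel{\mathbf p}{\rightarrow}}|_{R(\mathbf p)}$ is a genuine partial order.

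I anticipate the hard part will be the careful application of~(\ref{crossing}) in both parts: the identity is stated with four possibly-empty slots $x_1,z_1,x_2,z_2$ around a common middle letter, and matching an arbitrary pair of words to this shape requires correctly identifying the shared occurrence of $y$ (in part~1) or the shared pivot letter (in part~2) and grouping everything else into the surrounding blocks. I would handle this by first reducing, via the already-available consequences of $\Sigma$ together with the monotonicity rules, each witnessing word to a minimal form exhibiting exactly the relevant two-letter pattern, so that the sum of the two reduced words matches the left-hand side of~(\ref{crossing}) verbatim. A secondary technical point is ensuring at each step that the intermediate words remain $\ge_\Sigma\mathbf p$; this is guaranteed because every operation used (forming sums, multiplying, and applying~(\ref{crossing})) is monotone with respect to $\le_\Sigma$ and preserves the relation to $\mathbf p$.
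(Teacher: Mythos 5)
Your proposal is correct and follows essentially the same route as the paper's proof: in both parts one forms the sum of the two witnessing words (justified by~(\ref{adding inequalities})) and applies~(\ref{crossing}) with the shared letter as pivot, substituting the entire blocks to its left and right for $x_1,z_1,x_2,z_2$ — yielding $\mathbf u_1x\mathbf u_2y\mathbf v_2z\mathbf v_3$ for transitivity and $\mathbf u_1x\mathbf u_2y\mathbf v_2x\mathbf v_3$ (two occurrences of $x$) for part~2). The bookkeeping you worry about is a non-issue, and no preliminary reduction of the witnesses to a minimal form is needed (nor is such a reduction generally derivable from $\Sigma$): since~(\ref{crossing}) is an identity, arbitrary words may be substituted for its variables, which is exactly what your block-matching description already accomplishes.
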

\begin{proof} 
1) The relation $\stackrel{\mathbf p}{\rightarrow}$ is reflexive by definition. Let us prove transitivity. Suppose $x\stackrel{\mathbf p}{\rightarrow}y$ and $y\stackrel{\mathbf p}{\rightarrow}z$. This means that there exist words $\mathbf u_1x\mathbf u_2y\mathbf u_3,\mathbf v_1y\mathbf v_2z\mathbf v_3\ge_{\Sigma}\mathbf p$. By~(\ref{adding inequalities}) and~(\ref{crossing}) we have
$$\mathbf p\le_{\Sigma}\mathbf u_1x\mathbf u_2y\mathbf u_3+\mathbf v_1y\mathbf v_2z\mathbf v_3\le_{\Sigma}\mathbf u_1x\mathbf u_2y\mathbf v_2z\mathbf v_3$$
whence $x\stackrel{\mathbf p}{\rightarrow}z$.

2) There exist words $\mathbf u_1x\mathbf u_2y\mathbf u_3,\mathbf v_1y\mathbf v_2x\mathbf v_3\ge_{\Sigma}\mathbf p$. By~(\ref{adding inequalities}) and~(\ref{crossing}), we have
$$\mathbf p\le_{\Sigma}\mathbf u_1x\mathbf u_2y\mathbf u_3+\mathbf v_1y\mathbf v_2x\mathbf v_3\le_{\Sigma}\mathbf u_1x\mathbf u_2y\mathbf v_2x\mathbf v_3$$
whence $x\not\in R(\mathbf p)$, a contradiction.
\end{proof}

For a word $\mathbf w$, we denote by $\mathbf w^{(2)}$ the result of letterwise squaring of $\mathbf w$. In other words, $(x_1x_2\cdots x_n)^{(2)}=x_1^2x_2^2\cdots x_n^2$.

\begin{lemma}
\label{squaring lemma} Consider a polynomial $\mathbf p$ and a word $\mathbf w_1\mathbf w_2\mathbf w_3\ge_{\Sigma}\mathbf p$ where $\mathbf w_1$ and $\mathbf w_3$ can be empty. If $\mathbf w_2$ contains no letters which are rare in $\mathbf p$ then $\mathbf p\le_{\Sigma}\mathbf w_1\mathbf w_2^{(2)}\mathbf w_3$.
\end{lemma}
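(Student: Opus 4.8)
The plan is to reduce the lemma to squaring a single letter, and to obtain that single square by manufacturing an occurrence of $y^2$ and relocating it with two applications of~(\ref{crossing}). First I would record the special case that it suffices to prove: if $\mathbf u y\mathbf v\ge_\Sigma\mathbf p$ (with $\mathbf u,\mathbf v$ possibly empty) and $y$ is not rare in $\mathbf p$, then $\mathbf u y^2\mathbf v\ge_\Sigma\mathbf p$. Granting this, the lemma follows by induction on the length of $\mathbf w_2$: writing $\mathbf w_2=y\mathbf w_2'$ and applying the special case to $\mathbf w_1\cdot y\cdot(\mathbf w_2'\mathbf w_3)$ squares the first letter, and the induction hypothesis applied to $(\mathbf w_1y^2)\cdot\mathbf w_2'\cdot\mathbf w_3$ --- whose middle factor still has no rare letters --- gives $\mathbf p\le_\Sigma\mathbf w_1y^2(\mathbf w_2')^{(2)}\mathbf w_3=\mathbf w_1\mathbf w_2^{(2)}\mathbf w_3$. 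Here I use that every letter of $\mathbf w_2$ lies in $c(\mathbf p)$, so that rareness is defined for it: were some letter of $\mathbf w_1\mathbf w_2\mathbf w_3$ absent from $\mathbf p$, assigning it $0$ and every other letter $e_{11}$ would give $\mathbf w_1\mathbf w_2\mathbf w_3=0$ while $\mathbf p=e_{11}$, contradicting $\mathbf w_1\mathbf w_2\mathbf w_3\ge_\Sigma\mathbf p$ (valid in $B_0$ by Lemma~\ref{sigma holds in b0}).

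For the special case, non-rareness of $y$ supplies a word $\mathbf a y\mathbf b y\mathbf c\ge_\Sigma\mathbf p$ with two occurrences of $y$, and the key step is to collapse the gap $\mathbf b$ into a genuine square. Substituting the letter $y$ for $x$ and the word $\mathbf b$ for $y$ in the identity $xyx\approx x^2y^2$ from~(\ref{semigroup identities}) yields $y\mathbf b y\approx_\Sigma y^2\mathbf b^2$, hence $\mathbf a y^2\mathbf b^2\mathbf c\approx_\Sigma\mathbf a y\mathbf b y\mathbf c\ge_\Sigma\mathbf p$ (when $\mathbf b$ is empty the witness already displays an adjacent $y^2$). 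This gives a word $\ge_\Sigma\mathbf p$ in which two copies of $y$ stand side by side.

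It then remains to slide this $y^2$ into the prescribed position. Adding $\mathbf u y\mathbf v\ge_\Sigma\mathbf p$ to $\mathbf a y^2\mathbf b^2\mathbf c\ge_\Sigma\mathbf p$ and applying~(\ref{crossing}) at $y$ --- taking the prefix $\mathbf u y$ from the first word and the suffix $y\mathbf b^2\mathbf c$ (the part after the first of the two adjacent copies) from the second --- produces $\mathbf u y^2\mathbf b^2\mathbf c\ge_\Sigma\mathbf p$. A second application of~(\ref{crossing}), now retaining the prefix $\mathbf u y^2$ through the second copy of $y$ and taking the suffix $\mathbf v$ from $\mathbf u y\mathbf v$, produces $\mathbf u y^2\mathbf v\ge_\Sigma\mathbf p$, which is the special case.

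The main obstacle I anticipate is the creation of the adjacent $y^2$, rather than its relocation. Trying to bring the two separated occurrences in $\mathbf a y\mathbf b y\mathbf c$ together by~(\ref{crossing}) alone is self-defeating: each splice at $y$ deletes whatever lies strictly between the chosen occurrences and returns a word with only a single $y$, so no adjacency is ever manufactured. What breaks the deadlock is the purely multiplicative identity $xyx\approx x^2y^2$, which converts the separating factor $\mathbf b$ into an honest square; once the square is present,~(\ref{crossing}) is powerful enough to transport it to the required slot.
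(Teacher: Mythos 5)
Your proof is correct and takes essentially the same route as the paper's: both arguments manufacture an adjacent square by applying $xyx\approx x^2y^2$ from~(\ref{semigroup identities}) to the two-occurrence witness word supplied by non-rareness, and then transport that square into the desired position with two applications of~(\ref{crossing}) combined via~(\ref{adding inequalities}). Your explicit induction on the length of $\mathbf w_2$ and your check that every letter of a word $\mathbf w\ge_\Sigma\mathbf p$ lies in $c(\mathbf p)$ only make precise what the paper leaves implicit in its closing phrase ``applying this argument for each occurrence of each letter in $\mathbf w_2$.''
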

\begin{proof}
Consider an occurrence of a letter $x$ in $\mathbf w_2$: $\mathbf w_2=\mathbf sx\mathbf t$.  Since $x\not\in R(\mathbf p)$, there exists a word $\mathbf u_1x\mathbf u_2x\mathbf u_3\ge_{\Sigma}\mathbf p$. Hence
\begin{align*}
\mathbf p&\le_{\Sigma}\mathbf w_1\mathbf sx\mathbf t\mathbf w_3+\mathbf u_1x\mathbf u_2x\mathbf u_3&&\text{by (\ref{adding inequalities})}\\
&\approx_{\Sigma}\mathbf w_1\mathbf sx\mathbf t\mathbf w_3+\mathbf u_1x^2\mathbf u_2^2\mathbf u_3&&\text{by (\ref{semigroup identities})}\\
&\le_{\Sigma}\mathbf w_1\mathbf sx\mathbf t\mathbf w_3+\mathbf w_1\mathbf sx^2\mathbf u_2^2\mathbf u_3&&\text{by (\ref{crossing}) and (\ref{adding inequalities})}\\
&\le_{\Sigma}\mathbf w_1\mathbf sx^2\mathbf t\mathbf w_3.&&\text{by (\ref{crossing})}
\end{align*}
Applying this argument for each occurrence of each letter in $\mathbf w_2$, we obtain the statement of the lemma.
\end{proof}

We say that a polynomial $\mathbf p$ is \emph{degenerate} if $R(\mathbf p)=\varnothing$. 

\begin{lemma}
\label{max chain} For an arbitrary non-degenerate polynomial $\mathbf p$ and an arbitrary word $\mathbf w\ge_{\Sigma}\mathbf p$, the set of letters $R(\mathbf p)\cap c(\mathbf w)$ is a maximal chain in the partially ordered set $(R(\mathbf p),{\stackrel{\mathbf p}{\rightarrow}}|_{R(\mathbf p)})$.
\end{lemma}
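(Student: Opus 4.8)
The plan is to verify the two defining properties of a maximal chain separately: that $C:=R(\mathbf p)\cap c(\mathbf w)$ is totally ordered by $\stackrel{\mathbf p}{\rightarrow}$, and that it admits no proper extension to a larger chain inside $R(\mathbf p)$.

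The chain property is immediate. Given distinct $x,y\in C$, both are rare and both occur in $\mathbf w$, so each occurs in $\mathbf w$ exactly once. Hence in the word $\mathbf w\ge_{\Sigma}\mathbf p$ one of them precedes the other, which by the definition of $\stackrel{\mathbf p}{\rightarrow}$ yields $x\stackrel{\mathbf p}{\rightarrow}y$ or $y\stackrel{\mathbf p}{\rightarrow}x$. By Lemma~\ref{order}(2) the restriction of $\stackrel{\mathbf p}{\rightarrow}$ to $R(\mathbf p)$ is an order, so $C$ is a chain; moreover its linear order agrees with the left-to-right order of occurrences in $\mathbf w$, since an earlier occurrence forces $\stackrel{\mathbf p}{\rightarrow}$ in that direction and antisymmetry excludes the reverse. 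Write $C=\{c_1,\dots,c_k\}$ with $c_1\stackrel{\mathbf p}{\rightarrow}\cdots\stackrel{\mathbf p}{\rightarrow}c_k$ listed in this order.

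For maximality I argue by contradiction. Suppose some $z\in R(\mathbf p)$ is $\stackrel{\mathbf p}{\rightarrow}$-comparable with every element of $C$ but $z\notin C$, that is, $z\notin c(\mathbf w)$. Since $C\cup\{z\}$ is then a chain, $z$ falls into a definite slot: either $c_m\stackrel{\mathbf p}{\rightarrow}z\stackrel{\mathbf p}{\rightarrow}c_{m+1}$ for two neighbours in $C$, or $z$ precedes $c_1$, or $z$ follows $c_k$. The key observation is that, because the $c_i$ are listed in their order of occurrence in $\mathbf w$ and are the only rare letters occurring in $\mathbf w$, the factor of $\mathbf w$ strictly between $c_m$ and $c_{m+1}$ (respectively, before $c_1$ or after $c_k$) contains no rare letter, so Lemma~\ref{squaring lemma} is applicable to it. Using the witness words supplied by $c_m\stackrel{\mathbf p}{\rightarrow}z$ and $z\stackrel{\mathbf p}{\rightarrow}c_{m+1}$, together with repeated use of the crossing identity~(\ref{crossing}) and of~(\ref{adding inequalities}), I splice one occurrence of $z$ into that rare-free factor, obtaining a word $\mathbf w'\ge_{\Sigma}\mathbf p$ in which $z$ appears once, between $c_m$ and $c_{m+1}$.

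The heart of the argument, and the step I expect to be the main obstacle, is to manufacture a \emph{second} occurrence of $z$ in a word that is still $\ge_{\Sigma}\mathbf p$, thereby contradicting $z\in R(\mathbf p)$. The difficulty is that the crossing identity~(\ref{crossing}) tends to merge the two occurrences at the pivot letter, so naive splicing produces only one. To overcome this I intend to exploit the rare-free factor: having placed $z$ inside it, I use the squaring identity~(\ref{squaring eq}) in its word-substituted form $\mathbf q\mathbf r+\mathbf r\approx_{\Sigma}\mathbf q^2\mathbf r$ (and its left-right dual), comparing the spliced word with the original word $\mathbf w$, whose corresponding factor is $z$-free, in order to duplicate the block carrying $z$; the rare-freeness of the surrounding factor, through Lemma~\ref{squaring lemma}, is exactly what legitimises this duplication. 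The endpoint cases ($z$ before $c_1$ or after $c_k$) are the cleanest, since there $z$ can be attached at an extremity and doubled directly by~(\ref{squaring eq}), while the interior case requires the most care and is where the rare-free gap is essential. Once a word $\ge_{\Sigma}\mathbf p$ with two occurrences of $z$ is produced, $z$ fails to be rare, a contradiction. Hence no such $z$ exists, $C$ cannot be properly enlarged, and $C$ is a maximal chain.
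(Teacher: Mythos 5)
Your chain-property argument and your maximality mechanism for a \emph{non-empty} $C=R(\mathbf p)\cap c(\mathbf w)$ are essentially the paper's proof: the paper likewise distinguishes the cases where the extra letter would be minimal, maximal, or interior in the enlarged chain, squares the rare-free gap via Lemma~\ref{squaring lemma}, removes it by~(\ref{deleting squares}), splices in the comparability witnesses by~(\ref{crossing}) and~(\ref{adding inequalities}), and finally duplicates the block carrying the extra letter by distributivity and~(\ref{squaring eq}). So the splicing-then-doubling device you identify as the heart of the matter is exactly right and does go through.

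The genuine gap is that you never prove $C\neq\varnothing$, and your case analysis silently assumes it. Since $\mathbf p$ is non-degenerate, $R(\mathbf p)\neq\varnothing$, so the empty set is \emph{not} a maximal chain; non-emptiness of $C$ is therefore a necessary part of the conclusion, and it is precisely the case your argument cannot reach. If $C=\varnothing$, every $z\in R(\mathbf p)$ is vacuously comparable with all elements of $C$, but there are no letters $c_1,\dots,c_k$, hence no ``slot'', no neighbouring pair, and---crucially---no witness words of the form $\mathbf u_1c_m\mathbf u_2z\mathbf u_3\ge_{\Sigma}\mathbf p$ to splice with: the crossing identity~(\ref{crossing}) needs a pivot letter shared with (a derivative of) $\mathbf w$, and nothing guarantees such a shared letter exists. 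The paper handles this case by a different device: if $\mathbf w$ contains no rare letters, then $\mathbf p\le_{\Sigma}\mathbf w^{(2)}$ by Lemma~\ref{squaring lemma}, and for \emph{every} summand $\mathbf v$ of $\mathbf p$ one gets
$$\mathbf p\le_{\Sigma}\mathbf v+\mathbf w^{(2)}\approx_{\Sigma}\mathbf v+(\mathbf w^{(2)})^2\approx_{\Sigma}\mathbf v^2(\mathbf w^{(2)})^2$$
using~(\ref{periodicity}), (\ref{semigroup identities}), and the square-summand identity~(\ref{square summand}); this duplicates all letters of $\mathbf v$ at once, so no letter of $\mathbf p$ would be rare, contradicting non-degeneracy. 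Note that identity~(\ref{square summand}) plays no role anywhere in your sketch, and it is exactly what this missing case requires; your proof is incomplete until you supply this (or an equivalent) argument for $C\neq\varnothing$.
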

\begin{proof} First, we will prove that $R(\mathbf p)\cap c(\mathbf w)\neq\varnothing$. Suppose the contrary. By Lemma~\ref{squaring lemma}, we have $\mathbf p\le\mathbf w^{(2)}$. For any word $\mathbf v$ in $\mathbf p$, we have
\begin{align*}
\mathbf p&\le_{\Sigma}\mathbf v+\mathbf w^{(2)}&&\text{by (\ref{adding inequalities})}\\
&\approx_{\Sigma}\mathbf v+(\mathbf w^{(2)})^2&&\text{by (\ref{periodicity}) and (\ref{semigroup identities})}\\
&\approx_{\Sigma}\mathbf v^2(\mathbf w^{(2)})^2.&&\text{by (\ref{square summand})}
\end{align*}
This means that no letter of $\mathbf v$ is rare in $\mathbf p$. This holds for each word $\mathbf v$ in $\mathbf p$, so no letter in $\mathbf p$ is rare. Therefore, the polynomial $\mathbf p$ is degenerate, a contradiction.

It is obvious that $R(\mathbf p)\cap c(\mathbf w)$ is a chain. Suppose this chain is not maximal, so there exists a letter $x\in R(\mathbf p)\setminus c(\mathbf w)$ such that $(R(\mathbf p)\cap c(\mathbf w))\cup\{x\}$ is also a chain. There are three possible cases.

\textbf{Case~1.} The letter $x$ is the minimal element of the chain $(R(\mathbf p)\cap c(\mathbf w))\cup\{x\}$. Let $y$ be the minimal element of the chain $R(\mathbf p)\cap c(\mathbf w)$. The word $\mathbf w$ has the form $\mathbf w_1y\mathbf w_2$, where $\mathbf w_1$ contains no letters which are rare in $\mathbf p$. By Lemma~\ref{squaring lemma}, we have $\mathbf p\le_{\Sigma}\mathbf w_1^{(2)}y\mathbf w_2$. On the other hand, the condition $x\stackrel{\mathbf p}{\rightarrow}y$ means that there is a word $\mathbf u_1x\mathbf u_2y\mathbf u_3\ge_{\Sigma}\mathbf p$. Therefore,
\begin{align*}
\mathbf p&\le_{\Sigma}\mathbf w_1^{(2)}y\mathbf w_2+\mathbf u_1x\mathbf u_2y\mathbf u_3&&\text{by (\ref{adding inequalities})}\\
&\le_{\Sigma}\mathbf w_1^{(2)}y\mathbf w_2+\mathbf u_1x\mathbf u_2y\mathbf w_2&&\text{by (\ref{crossing}) and (\ref{adding inequalities})}\\
&\le_{\Sigma}y\mathbf w_2+\mathbf u_1x\mathbf u_2y\mathbf w_2&&\text{by (\ref{deleting squares})}\\
&\approx_{\Sigma}(\mathbf u_1x\mathbf u_2)^2y\mathbf w_2&&\text{by (\ref{squaring eq})}
\end{align*}
whence $x\not\in R(\mathbf p)$, a contradiction.

\textbf{Case~2.} The letter $x$ is the maximal element of $(R(\mathbf p)\cap c(\mathbf w))\cup\{x\}$. This case is dual to the previous one.

\textbf{Case~3.} The letter $x$, as an element of the chain $(R(\mathbf p)\cap c(\mathbf w))\cup\{x\}$, covers an element $y$ and is covered by an element $z$. Hence $z$ covers $y$ in the chain $R(\mathbf p)\cap c(\mathbf w)$. Therefore, $\mathbf w$ has the form $\mathbf w_1y\mathbf w_2z\mathbf w_3$ where $c(\mathbf w_2)\cap R(\mathbf p)=\varnothing$. By Lemma~\ref{squaring lemma}, we have $\mathbf p\le_{\Sigma}\mathbf w_1y\mathbf w_2^{(2)}z\mathbf w_3$. Furthermore, $y\stackrel{\mathbf p}{\rightarrow}x\stackrel{\mathbf p}{\rightarrow}z$, so there exist words $\mathbf u_1y\mathbf u_2x\mathbf u_3,\mathbf v_1x\mathbf v_2z\mathbf v_3\ge_{\Sigma}\mathbf p$. Hence
\begin{align*}
\mathbf p&\le_{\Sigma}\mathbf w_1y\mathbf w_2^{(2)}z\mathbf w_3+\mathbf u_1y\mathbf u_2x\mathbf u_3+\mathbf v_1x\mathbf v_2z\mathbf v_3&&\text{by (\ref{adding inequalities})}\\
&\le_{\Sigma}\mathbf w_1y\mathbf w_2^{(2)}z\mathbf w_3+\mathbf u_1y\mathbf u_2x\mathbf v_2z\mathbf v_3&&\text{by (\ref{crossing})}\\
&\le_{\Sigma}\mathbf w_1yz\mathbf w_3+\mathbf u_1y\mathbf u_2x\mathbf v_2z\mathbf v_3&&\text{by (\ref{deleting squares})}\\
&\le_{\Sigma}\mathbf w_1yz\mathbf w_3+\mathbf w_1y\mathbf u_2x\mathbf v_2z\mathbf v_3&&\text{by (\ref{crossing}) and (\ref{adding inequalities})}\\
&\le_{\Sigma}\mathbf w_1yz\mathbf w_3+\mathbf w_1y\mathbf u_2x\mathbf v_2z\mathbf w_3&&\text{by (\ref{crossing}) and (\ref{adding inequalities})}\\
&\approx_{\Sigma}\mathbf w_1y(z\mathbf w_3+\mathbf u_2x\mathbf v_2z\mathbf w_3)&&\text{the distributive law}\\
&\approx_{\Sigma}\mathbf w_1y(\mathbf u_2x\mathbf v_2)^2z\mathbf w_3&&\text{by (\ref{squaring eq})}
\end{align*}
whence $x\not\in R(\mathbf p)$, a contradiction.
\end{proof}

\begin{lemma}
\label{finite poset} In the ordered set $(R(\mathbf p),{\stackrel{\mathbf p}{\rightarrow}}|_{R(\mathbf p)})$, each maximal chain has non-empty intersection with each maximal antichain.
\end{lemma}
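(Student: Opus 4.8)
The plan is to argue by contradiction: suppose some maximal chain $C$ and some maximal antichain $A$ satisfy $C\cap A=\varnothing$, and derive a contradiction. First I would realize $C$ concretely as a word. Combining, for each covering pair of $C$, a witness of the corresponding relation $\stackrel{\mathbf p}{\rightarrow}$ by repeated use of~(\ref{crossing}) (exactly as in the transitivity argument of Lemma~\ref{order}), I obtain a word $\mathbf w\ge_\Sigma\mathbf p$ in which the elements of $C$ occur in their chain order; by Lemma~\ref{max chain} the set $R(\mathbf p)\cap c(\mathbf w)$ is a maximal chain containing $C$, hence equals $C$. In particular no rare letter outside $C$ occurs in $\mathbf w$, so between two consecutive elements of $C$ there are no rare letters.

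Since $A$ is a maximal antichain and $C\cap A=\varnothing$, every $c\in C$ is comparable to some element of $A$; as $A$ is an antichain, $c$ cannot lie both below one element of $A$ and above another. This splits $C$ into the set $C^-$ of letters lying below some element of $A$ and the set $C^+$ of letters lying above some element of $A$, and one checks, using that $C$ is a chain and $A$ an antichain, that every element of $C^-$ precedes every element of $C^+$. If either part were empty, a single element $a\in A$ would be comparable to all of $C$, so $C\cup\{a\}$ would be a strictly larger chain, contradicting maximality of $C$; hence both parts are non-empty. Let $c_m$ be the largest element of $C^-$ and $c_{m+1}$ the smallest of $C^+$: these are consecutive in $C$, so $\mathbf w=\mathbf w_1c_m\mathbf w_2c_{m+1}\mathbf w_3$ with $\mathbf w_2$ free of rare letters. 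Choose $a_1,a_2\in A$ with $c_m\stackrel{\mathbf p}{\rightarrow}a_1$ and $a_2\stackrel{\mathbf p}{\rightarrow}c_{m+1}$.

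If $a_1=a_2$, then this common letter is comparable to every element of $C$ (it lies above everything up to $c_m$ and below everything from $c_{m+1}$ on), again contradicting maximality of $C$. So assume $a_1\neq a_2$; being distinct members of the antichain $A$, they are incomparable. This is the crucial case: the letters $a_2,c_m,c_{m+1},a_1$ with the relations $a_2\stackrel{\mathbf p}{\rightarrow}c_{m+1}$, $c_m\stackrel{\mathbf p}{\rightarrow}c_{m+1}$, $c_m\stackrel{\mathbf p}{\rightarrow}a_1$ together with the incomparability of $a_1$ and $a_2$ reproduce the four-element poset $N$, which is the smallest poset in which a maximal chain and a maximal antichain can be disjoint; this is exactly the configuration one must exclude. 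I would exclude it by using~(\ref{rook monoid}) to force the missing relation $a_2\stackrel{\mathbf p}{\rightarrow}a_1$: the three hypotheses match the left-hand side of $xy+zy+zt\le xt$ under $x\mapsto a_2,\ y\mapsto c_{m+1},\ z\mapsto c_m,\ t\mapsto a_1$, whose right-hand side is $a_2a_1$, contradicting the incomparability of $a_1$ and $a_2$.

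The main obstacle is that, unlike~(\ref{crossing}), the identity~(\ref{rook monoid}) is rigid: to apply its word-substituted form $\mathbf X\mathbf Y+\mathbf Z\mathbf Y+\mathbf Z\mathbf T\le\mathbf X\mathbf T$ I must present the three witnessing words as $\mathbf X\mathbf Y$, $\mathbf Z\mathbf Y$, $\mathbf Z\mathbf T$ with a common suffix $\mathbf Y$ and a common prefix $\mathbf Z$, which plain splicing via~(\ref{crossing}) cannot by itself arrange. I would resolve this in two moves. First, applying Lemma~\ref{squaring lemma} to the rare-letter-free block $\mathbf w_2$ and then erasing the resulting squares by~(\ref{deleting squares}), I replace $\mathbf w$ by a word $\mathbf w_1c_mc_{m+1}\mathbf w_3\ge_\Sigma\mathbf p$ in which $c_m$ and $c_{m+1}$ are \emph{adjacent}; splitting it as $\mathbf Z=\mathbf w_1c_m$ and $\mathbf Y=c_{m+1}\mathbf w_3$ gives a clean common boundary. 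Second, I use~(\ref{crossing}) to re-glue a witness of $a_2\stackrel{\mathbf p}{\rightarrow}c_{m+1}$ along $c_{m+1}$ so that it ends in the exact suffix $\mathbf Y$, and a witness of $c_m\stackrel{\mathbf p}{\rightarrow}a_1$ along $c_m$ so that it begins with the exact prefix $\mathbf Z$. The three words then have the shape required by~(\ref{rook monoid}), which yields a word $\ge_\Sigma\mathbf p$ with $a_2$ preceding $a_1$, that is, $a_2\stackrel{\mathbf p}{\rightarrow}a_1$ — the contradiction that completes the argument.
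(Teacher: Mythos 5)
Your proof is correct and follows essentially the same route as the paper's: you locate a covering pair of $C$ straddling $A$ (your $c_m,c_{m+1}$ play the roles of the paper's $x,z$, and your $a_1,a_2$ the roles of $y,t$), make the pair adjacent via Lemma~\ref{squaring lemma} and~(\ref{deleting squares}), align prefixes and suffixes with~(\ref{crossing}), and apply~(\ref{rook monoid}) to force a comparability between two elements of $A$, with the case $a_1=a_2$ disposed of by maximality of $C$ just as the paper disposes of $t=y$. The only cosmetic differences are how the straddling pair is found (your $C^-/C^+$ partition versus the paper's choice of the maximal element of $C$ lying below $A$) and how the rare-letter-free gap between the pair is certified (you realize all of $C$ in a single word and invoke Lemma~\ref{max chain}, while the paper observes that $z$ covers $x$ in the whole poset, so any witness of $x\stackrel{\mathbf p}{\rightarrow}z$ already has a rare-letter-free middle).
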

\begin{proof} 
Let $C$ be a maximal chain and $A$ a maximal antichain. Suppose $C\cap A=\varnothing$. Consider the least element $x_0$ of $C$. There are no elements $y\in A$ with $y\stackrel{\mathbf p}{\rightarrow} x_0$ since $C$ is maximal. The element $x_0$ can not be incomparable with all elements of $A$ since $A$ is maximal. Therefore, there exists at least one element $y_0$ in $A$ such that $x_0\stackrel{\mathbf p}{\rightarrow} y_0$. Let $x$ be the maximal element in $C$ such that $x\stackrel{\mathbf p}{\rightarrow}y$ for some $y\in A$. Since the chain $C$ is maximal, $x$ is not the absolute maximal element of $C$, so we can consider $z\in C$ which covers $x$ in $C$. By maximality of $C$, $z$ covers $x$ in $(R(\mathbf p),{\stackrel{\mathbf p}{\rightarrow}}|_{R(\mathbf p)})$. Since $A\cup\{z\}$ is not an antichain, the element $z$ is comparable with an element $t\in A$. The condition of maximality of $x$ excludes the case $z\stackrel{\mathbf p}{\rightarrow}t$ whence $t\stackrel{\mathbf p}{\rightarrow}z$. The conditions $x\stackrel{\mathbf p}{\rightarrow}y$, $x\stackrel{\mathbf p}{\rightarrow}z$, and $t\stackrel{\mathbf p}{\rightarrow}z$ mean that $\mathbf p\le_{\Sigma}\mathbf u_1x\mathbf u_2y\mathbf u_3,\mathbf v_1x\mathbf v_2z\mathbf v_3,\mathbf w_1t\mathbf w_2z\mathbf w_3$ for some $\mathbf u_1,\mathbf u_2,\mathbf u_3,\mathbf v_1,\mathbf v_2,\mathbf v_3,\mathbf w_1,\mathbf w_2,\mathbf w_3$. Since $z$ covers $x$, we have $c(\mathbf v_2)\cap R(\mathbf p)=\varnothing$. Hence, by Lemma~\ref{squaring lemma}, $\mathbf p\le_{\Sigma}\mathbf v_1x\mathbf v_2^{(2)}z\mathbf v_3$. Now we have
\begin{align*}
\mathbf u&\le_{\Sigma}\mathbf u_1x\mathbf u_2y\mathbf u_3+\mathbf v_1x\mathbf v_2^{(2)}z\mathbf v_3+\mathbf w_1t\mathbf w_2z\mathbf w_3&&\text{by (\ref{adding inequalities})}\\
&\le_{\Sigma}\mathbf u_1x\mathbf u_2y\mathbf u_3+\mathbf v_1xz\mathbf v_3+\mathbf w_1t\mathbf w_2z\mathbf w_3&&\text{by (\ref{deleting squares})}\\
&\le_{\Sigma}\mathbf u_1x\mathbf u_2y\mathbf u_3+\mathbf u_1xz\mathbf v_3+\mathbf w_1t\mathbf w_2z\mathbf w_3&&\text{by (\ref{crossing}) and (\ref{adding inequalities})}\\
&\le_{\Sigma}\mathbf u_1x\mathbf u_2y\mathbf u_3+\mathbf u_1xz\mathbf w_3+\mathbf w_1t\mathbf w_2z\mathbf w_3&&\text{by (\ref{crossing}) and (\ref{adding inequalities})}\\
&\le_{\Sigma}\mathbf w_1t\mathbf w_2\mathbf u_2y\mathbf u_3&&\text{by (\ref{rook monoid})}
\end{align*}
whence $t\stackrel{\mathbf p}{\rightarrow}y$. If $t\neq y$ then $A$ is not an antichain. If $t=y$ then $x\stackrel{\mathbf p}{\rightarrow}y=t\stackrel{\mathbf p}{\rightarrow}z$, whence $C\cup\{y\}$ is also a chain, so the chain $C$ is not maximal. In any case, a contradiction completes the proof.
\end{proof}

\begin{lemma} 
\label{isomorphism}
If $B_0$ satisfies an identity $\mathbf p\approx\mathbf q$ then
\begin{itemize}
\item[\textup{1)}] $c(\mathbf p)=c(\mathbf q)$;
\item[\textup{2)}] $R(\mathbf p)=R(\mathbf q)$;
\item[\textup{3)}] the relations $\stackrel{\mathbf p}{\rightarrow}$ and $\stackrel{\mathbf q}{\rightarrow}$ coincide.
\end{itemize}
\end{lemma}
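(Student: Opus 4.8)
The plan is to detect each of $c(\mathbf p)$, $R(\mathbf p)$ and $\stackrel{\mathbf p}{\rightarrow}$ purely from the function that $\mathbf p$ induces on $B_0$; since $B_0\models\mathbf p\approx\mathbf q$ means that $\mathbf p$ and $\mathbf q$ induce the same function, all three invariants will then coincide. Three elementary facts about $B_0$ drive everything. First, $e_{12}^2=0$, so any word containing two letters sent to $e_{12}$ evaluates to $0$. Second, in the additive order of $B_0$ the element $0$ is the bottom and $e_{11},e_{12},e_{22}$ are pairwise incomparable atoms, so each of them is maximal. Third, combining these with Lemma~\ref{sigma holds in b0}: whenever $\mathbf w\ge_\Sigma\mathbf p$ and a substitution $\phi$ satisfies $\mathbf p\phi=e_{12}$ (the value of $\mathbf p$ under $\phi$), the inequality $\mathbf p\le\mathbf w$ holds in $B_0$ and forces $e_{12}=\mathbf p\phi\le\mathbf w\phi$, whence $\mathbf w\phi=e_{12}$. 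Thus every word above $\mathbf p$ is \emph{sorted} by such a $\phi$: its image has the shape $e_{11}^{\ast}e_{12}e_{22}^{\ast}$ with a single $e_{12}$. I will repeatedly use that each summand $\mathbf v$ of $\mathbf p$ is a word with $\mathbf v\ge_\Sigma\mathbf p$.

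For part~(1), fix a letter $x$ and let $\sigma_x$ send $x\mapsto0$ and every other letter to $e_{11}$. A word evaluates under $\sigma_x$ to $0$ if it contains $x$ and to $e_{11}$ otherwise, so, as $0$ is additively absorbing, $\mathbf p\sigma_x=0$ exactly when $x\in c(\mathbf p)$ and $\mathbf p\sigma_x=e_{11}$ otherwise. From $\mathbf p\sigma_x=\mathbf q\sigma_x$ we get $x\in c(\mathbf p)\iff x\in c(\mathbf q)$, that is, $c(\mathbf p)=c(\mathbf q)$.

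For part~(2) I will show that $x$ is rare iff there is a substitution $\phi$ with $\phi(x)=e_{12}$ and $\mathbf p\phi=e_{12}$; this condition is manifestly determined by the function, so the conclusion follows with part~(1). The forward implication is immediate: if $x$ is non-rare, a word $\mathbf w\ge_\Sigma\mathbf p$ has two occurrences of $x$, and the first fact gives $\mathbf w\phi=0$, hence $\mathbf p\phi=0$, for every $\phi$ with $\phi(x)=e_{12}$. The converse is the substantial step: given $x\in R(\mathbf p)$, extend $\{x\}$ to a maximal antichain $M$ of $(R(\mathbf p),\stackrel{\mathbf p}{\rightarrow}|_{R(\mathbf p)})$ and define $\phi$ by sending $M$ to $e_{12}$, every rare letter below $M$ to $e_{11}$, every rare letter above $M$ to $e_{22}$, and each non-rare letter to $e_{11}$ or $e_{22}$ according to whether it precedes or follows the markers. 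Then $\mathbf p\phi=e_{12}$: by Lemma~\ref{max chain} the rare letters of any summand form a maximal chain, which by Lemma~\ref{finite poset} meets $M$ in exactly one point (the unique marker occurrence), and by Lemma~\ref{order} these rare letters occur in $\stackrel{\mathbf p}{\rightarrow}$-order; the same lemma shows that the side chosen for each non-rare letter is consistent across all summands, since a letter that precedes a marker in one summand and follows one in another would yield a pair of mutually related rare markers, forcing them equal and then contradicting rareness.

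Part~(3) uses the same toolkit in two stages. Comparability is detected exactly as rareness: $x$ and $y$ are $\stackrel{\mathbf p}{\rightarrow}$-comparable iff $\mathbf p\phi=0$ for every $\phi$ with $\phi(x)=\phi(y)=e_{12}$ (forward via the first fact applied to a word above $\mathbf p$ containing both letters; converse via a marker antichain containing the incomparable pair), so the comparability relation is function-determined. To fix directions I use the sorting fact of the first paragraph: writing $e_{11}<e_{12}<e_{22}$, every $\phi$ with $\mathbf p\phi=e_{12}$ satisfies $\phi(u)\le\phi(v)$ whenever $u\stackrel{\mathbf p}{\rightarrow}v$; and conversely, for comparable $u,v$ with only $v\stackrel{\mathbf p}{\rightarrow}u$, a marker antichain through the lower letter avoiding the upper one produces a $\phi$ with $\mathbf p\phi=e_{12}$ and $\phi(u)<\phi(v)$. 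Hence $u\stackrel{\mathbf p}{\rightarrow}v$ is equivalent to: $u=v$, or $u,v$ are comparable and no $\phi$ with $\mathbf p\phi=e_{12}$ has $\phi(v)<\phi(u)$ — again a function-determined condition, giving $\stackrel{\mathbf p}{\rightarrow}\,=\,\stackrel{\mathbf q}{\rightarrow}$. The main obstacle throughout is precisely the converse directions in parts~(2) and~(3): manufacturing one substitution that keeps $\mathbf p$ equal to the atom $e_{12}$, i.e. that sorts \emph{every} summand at once. Incomparable and non-rare letters are the delicate cases, and it is exactly here that Lemmas~\ref{order}, \ref{max chain} and~\ref{finite poset}, i.e. the chain/antichain analysis of $(R(\mathbf p),\stackrel{\mathbf p}{\rightarrow})$, are indispensable.
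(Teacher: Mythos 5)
Your parts (1) and (2) are correct and coincide with the paper's own argument: the substitution in (1) is the paper's, and the substitution you build in (2) from a maximal antichain $M\ni x$ of $(R(\mathbf p),{\stackrel{\mathbf p}{\rightarrow}}|_{R(\mathbf p)})$ is precisely the paper's function $\val_{\mathbf p,M}$, justified by the same appeal to Lemmas~\ref{order}, \ref{max chain} and~\ref{finite poset}.

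Part (3), however, has a genuine gap, located exactly at the case you yourself call delicate. Your Stage-1 claim --- distinct $x,y$ are $\stackrel{\mathbf p}{\rightarrow}$-comparable if and only if $\mathbf p\phi=0$ for every $\phi$ with $\phi(x)=\phi(y)=e_{12}$ --- is false. Consider $\mathbf p=ax^2b+cy^2d$. Since $x$ and $y$ occur squared, they are non-rare, so already $\phi(x)=e_{12}$ alone forces $\mathbf p\phi=0$; hence the right-hand side of your equivalence holds vacuously. Yet $x$ and $y$ are incomparable: under $\phi$ with $a\mapsto e_{12}$, $x,b\mapsto e_{22}$, $c,y\mapsto e_{11}$, $d\mapsto e_{12}$ we get $\mathbf p\phi=e_{12}$, while any word with an occurrence of $x$ before an occurrence of $y$ evaluates to $0$ under $\phi$ (it contains $e_{22}$ before $e_{11}$); since $\mathbf w\ge_{\Sigma}\mathbf p$ forces $\mathbf w\phi=e_{12}$ by Lemma~\ref{sigma holds in b0}, no such word lies above $\mathbf p$, i.e. $x\not\stackrel{\mathbf p}{\rightarrow}y$, and the mirror substitution ($a,x\mapsto e_{11}$, $b,c\mapsto e_{12}$, $y,d\mapsto e_{22}$) excludes $y\stackrel{\mathbf p}{\rightarrow}x$. (Here $a,b,c,d$ are rare, so $\mathbf p$ is non-degenerate.) The reason your converse argument does not apply is structural: it needs ``a marker antichain containing the incomparable pair,'' but a non-rare letter can never belong to an antichain of $(R(\mathbf p),{\stackrel{\mathbf p}{\rightarrow}}|_{R(\mathbf p)})$, and can never receive the value $e_{12}$ under any $\phi$ with $\mathbf p\phi\neq0$. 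The same defect breaks the converse of your directional test (``a marker antichain through the lower letter''). Thus both of your tests are established only when both letters are rare --- which is the paper's Case~1 --- and the comparability test is simply not a correct characterization beyond that case. Even if your final combined condition were a true characterization, you have not proved its converse for pairs involving a non-rare letter, and that is exactly the content of the paper's Cases~2 and~3, where semantic detection is abandoned: the relation $x\stackrel{\mathbf p}{\rightarrow}y$ is transferred to $x\stackrel{\mathbf q}{\rightarrow}y$ syntactically, by first relating $x$ and $y$ to flanking rare letters and then rebuilding a word above $\mathbf q$ through $\Sigma$-derivations using (\ref{crossing}), (\ref{square summand}), (\ref{deleting squares}) and Lemma~\ref{squaring lemma}. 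Without an argument of this kind, part (3) does not go through.
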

\begin{proof} 1) Suppose $c(\mathbf p)\neq c(\mathbf q)$. Without loss of generality, suppose $c(\mathbf p)\not\subseteq c(\mathbf q)$. We assign the value $e_{11}$ to all letters in $c(\mathbf q)$ and the value $0$ to all letters in $c(\mathbf p)\setminus c(\mathbf q)$. It is clear that $\mathbf p$ takes the value $0$ and $\mathbf q$ takes the value $e_{11}$ whence the identity $\mathbf p\approx\mathbf q$ fails in $B_0$.

In the remaining part of the proof, for a non-degenerate polynomial $\mathbf p$, we need the function $\val_{\mathbf p,A}\colon c(\mathbf p)\rightarrow B_0$ associated with an arbitrary maximal antichain $A$ in the ordered set $(R(\mathbf p),{\stackrel{\mathbf p}{\rightarrow}}|_{R(\mathbf p)})$. This function is defined as follows:
$$\val_{\mathbf p,A}(x)=\left\{\begin{aligned}
&e_{12}\text{ if }x\in A,\\
&e_{11}\text{ if }x\not\in A\text{ and }x\stackrel{\mathbf p}{\rightarrow}y\text{ for some }y\in A,\\
&e_{22}\text{ otherwise.}
\end{aligned}\right.$$

The function $\val_{\mathbf p,A}$ can be naturally extended to all polynomials over the alphabet $c(\mathbf p)$. Let us prove that $\val_{\mathbf p,A}(\mathbf p)=e_{12}$. Take an arbitrary monomial $\mathbf w$ in $\mathbf p$. By Lemma~\ref{max chain}, the set $R(\mathbf p)\cap c(\mathbf w)$ is a maximal chain in the ordered set $(R(\mathbf p),{\stackrel{\mathbf p}{\rightarrow}}|_{R(\mathbf p)})$. By Lemma~\ref{finite poset}, this maximal chain contains a common element $y$ with $A$. Since $y\in R(\mathbf p)$, this letter occurs in $\mathbf w$ exactly once. By definition of the function $\val_{\mathbf p,A}$, we have $\val_{\mathbf p,A}(y)=e_{12}$ and $\val_{\mathbf p,A}(x)=e_{11}$ for each letter $x$ which precedes $y$ in $\mathbf w$. Let us prove that $\val_{\mathbf p,A}(x)=e_{22}$ for each $x$ which succeeds $y$ in $\mathbf w$. We have $y\stackrel{\mathbf p}{\rightarrow}x$ by definition of ${\stackrel{\mathbf p}{\rightarrow}}$. Since $A$ is an antichain, this implies $x\not\in A$. Therefore, $\val_{\mathbf p,A}(x)\neq e_{12}$. Suppose $\val_{\mathbf p,A}(x)=e_{11}$. Therefore, $x\stackrel{\mathbf p}{\rightarrow}z$ for some $z\in A$. Therefore, $y\stackrel{\mathbf p}{\rightarrow}z$ whence $y=z$ because $A$ is an antichain and $y,z\in A$. Therefore, $y\stackrel{\mathbf p}{\rightarrow}x\stackrel{\mathbf p}{\rightarrow}y$ which contradicts Lemma~\ref{order}.2). The contradiction shows that $\val_{\mathbf p,A}(x)=e_{22}$. Hence $\val_{\mathbf p,A}(\mathbf w)=e_{12}$. This holds for each word $\mathbf w$ in $\mathbf p$, so $\val_{\mathbf p,A}(\mathbf p)=e_{12}$.

It is easy to see that $\val_{\mathbf p,A}(x)=e_{22}$ if and only if $y\stackrel{\mathbf p}{\rightarrow}x$ for some $y\in A$.

Now we can return to the proof.

2) Suppose $R(\mathbf p)\neq R(\mathbf q)$. Without loss of generality, we can assume that there exists $x\in R(\mathbf p)\setminus R(\mathbf q)$. By the item 1) of this lemma, we have $c(\mathbf p)=c(\mathbf q)$. Take an arbitrary maximal antichain $A$ in the ordered set $(R(\mathbf p),{\stackrel{\mathbf p}{\rightarrow}}|_{R(\mathbf p)})$ such that $x\in A$. As was proven above, $\val_{\mathbf p,A}(\mathbf p)=e_{12}$. On the other hand, $x\not\in R(\mathbf q)$, whence there exists a word $\mathbf u_1x\mathbf u_2x\mathbf u_3\ge_{\Sigma}\mathbf q$. By definition, $\val_{\mathbf p,A}(x)=e_{12}$, so it is clear that $\val_{\mathbf p,A}(\mathbf u_1x\mathbf u_2x\mathbf u_3)=0$. Since $\Sigma$ holds in $B_0$, we have $\val_{\mathbf p,A}(\mathbf q)\le\val_{\mathbf p,A}(\mathbf u_1x\mathbf u_2x\mathbf u_3)=0$, that is $\val_{\mathbf p,A}(\mathbf q)=0$. Hence the identity $\mathbf p\approx\mathbf q$ fails in $B_0$.

3) Suppose $c(\mathbf p)=c(\mathbf q)$ and $R(\mathbf p)=R(\mathbf q)$. We must prove that ${\stackrel{\mathbf p}{\rightarrow}}={\stackrel{\mathbf q}{\rightarrow}}$. We start with the case when $\mathbf p$ is degenerate. Since $R(\mathbf p)=R(\mathbf q)$, the polynomial $\mathbf q$ is degenerate too. Let us prove that the relation ${\stackrel{\mathbf p}{\rightarrow}}$ coincides with $c(\mathbf p)\times c(\mathbf p)$. Take two arbitrary letters $x,y\in c(\mathbf p)$. There exist words $\mathbf u_1x\mathbf u_2,\mathbf v_1y\mathbf v_2$ in $\mathbf p$. These words can coincide, which does not contradict the further argument. By Lemma~\ref{squaring lemma}, we have $\mathbf p\le_{\Sigma}(\mathbf u_1x\mathbf u_2)^{(2)}=\mathbf u_1^{(2)}x^2\mathbf u_2^{(2)}$ and, similarly, $\mathbf p\le_{\Sigma}\mathbf v_1^{(2)}y^2\mathbf v_2^{(2)}$. Hence
\begin{align*}
\mathbf p&\le_{\Sigma}\mathbf u_1^{(2)}x^2\mathbf u_2^{(2)}+\mathbf v_1^{(2)}y^2\mathbf v_2^{(2)}&&\text{by (\ref{adding inequalities})}\\
&\le_{\Sigma}x^2+y^2&&\text{by~(\ref{deleting squares})}\\
&\approx_{\Sigma}x^2y^2.&&\text{by~(\ref{square summand}) and (\ref{periodicity})}
\end{align*}
We see that $x\stackrel{\mathbf p}{\rightarrow}y$. This is true for any $x$ and $y$, so ${\stackrel{\mathbf p}{\rightarrow}}=c(\mathbf p)\times c(\mathbf p)$. The same argument shows that ${\stackrel{\mathbf q}{\rightarrow}}=c(\mathbf p)\times c(\mathbf p)$.

From this moment, we suppose $\mathbf p$ and $\mathbf q$ are non-degenerate. Let $x\stackrel{\mathbf p}{\rightarrow}y$, and $x\not\stackrel{\mathbf q}{\rightarrow}y$ for some $x,y\in c(\mathbf p)$. There are three possible cases.

\textbf{Case~1.} $x,y\in R(\mathbf p)$. Here we have two subcases.

\textbf{Subcase~1.1.} $y\stackrel{\mathbf q}{\rightarrow}x$. Take a maximal antichain $A$ in $(R(\mathbf p),{\stackrel{\mathbf p}{\rightarrow}}|_{R(\mathbf p)})$ which contains $x$. We have $\val_{\mathbf p,A}(x)=e_{12}$, $\val_{\mathbf p,A}(y)=e_{22}$, and $\val_{\mathbf p,A}(\mathbf p)=e_{12}$. Furthermore, there exists a word $\mathbf u_1y\mathbf u_2x\mathbf u_3\ge_{\Sigma}\mathbf q$. Since $\val_{\mathbf p,A}(y)=e_{22}$ and $\val_{\mathbf p,A}(x)=e_{12}$, we have $\val_{\mathbf p,A}(\mathbf u_1y\mathbf u_2x\mathbf u_3)=0$. Hence $\val_{\mathbf p,A}(\mathbf q)=0$, so $\mathbf p\approx\mathbf q$ fails in $B_0$.

\textbf{Subcase~1.2.} $x$ and $y$ are incomparable with respect to $\stackrel{\mathbf q}{\rightarrow}$. Take an arbitrary maximal antichain $A$ in the poset $(R(\mathbf q),{\stackrel{\mathbf q}{\rightarrow}}|_{R(\mathbf q)})$ which contains $x$ and $y$. We have $\val_{\mathbf q,A}(x)=\val_{\mathbf q,A}(y)=\val_{\mathbf q,A}(\mathbf q)=e_{12}$. Furthermore, $x\stackrel{\mathbf p}{\rightarrow}y$ means that there exists a word $\mathbf u_1x\mathbf u_2y\mathbf u_3\ge_{\Sigma}\mathbf p$. Since $\val_{\mathbf q,A}(x)=\val_{\mathbf q,A}(y)=e_{12}$, we have $\val_{\mathbf q,A}(\mathbf p)\le\val_{\mathbf q,A}(\mathbf u_1x\mathbf u_2y\mathbf u_3)=0$ whence $\val_{\mathbf q,A}(\mathbf p)=0$. Thus $\mathbf p\approx\mathbf q$ fails in $B_0$.

\textbf{Case~2.} Either $x\in R(\mathbf p)$ and $y\not\in R(\mathbf p)$ or $y\in R(\mathbf p)$ and $x\not\in R(\mathbf p)$. Since these two possibilities are dual to each other, it is sufficient to consider the latter one. Take the set
$$I=\{z\in R(\mathbf q)\mid x\not\stackrel{\mathbf q}{\rightarrow}z\}\cup\{x\}.$$
It is clear that $I$ is an ideal in $(R(\mathbf q),{\stackrel{\mathbf q}{\rightarrow}}|_{R(\mathbf q)})$. Let $M$ be the set of all maximal elements of $I$. In particular, we have $x\in M$. Note that $M$ is an antichain in $(R(\mathbf q),{\stackrel{\mathbf q}{\rightarrow}}|_{R(\mathbf q)})$. To prove that this antichain is maximal, take an element $z\in R(\mathbf q)\setminus M$. If $x\stackrel{\mathbf q}{\rightarrow}z$ then $M\cup\{z\}$ is not an antichain because it contains two comparable elements $x$ and $z$. If $x\not\stackrel{\mathbf q}{\rightarrow}z$ then $z\in I$. Hence $z\stackrel{\mathbf q}{\rightarrow}t$ for some $t\in M$. Hence $M\cup\{z\}$ is not an antichain because it contains two comparable elements $z$ and $t$.

Consider the function $\val_{\mathbf q,M}$. We have $\val_{\mathbf q,M}(\mathbf q)=\val_{\mathbf q,M}(x)=e_{12}$. Since $x\not\stackrel{\mathbf q}{\rightarrow}y$, we have $y\in I$. If $y\in M$ then $\val_{\mathbf q,M}(y)=e_{12}$. If $y\in I\setminus M$ then $y\stackrel{\mathbf q}{\rightarrow}t$ for some $t\in M$ whence $\val_{\mathbf q,M}(y)=e_{11}$. In any case, $\val_{\mathbf q,M}(y)\neq e_{22}$. Since $x\stackrel{\mathbf p}{\rightarrow}y$, there exists $\mathbf u_1x\mathbf u_2y\mathbf u_3\ge_{\Sigma}\mathbf p$. The conditions $\val_{\mathbf q,M}(x)=e_{12}$ and $\val_{\mathbf q,M}(y)\neq e_{22}$ imply $\val_{\mathbf q,M}(\mathbf p)\le\val_{\mathbf q,M}(\mathbf u_1x\mathbf u_2y\mathbf u_3)=0$. Hence $\val_{\mathbf q,M}(\mathbf p)=0\neq\val_{\mathbf q,M}(\mathbf q)$, so $\mathbf p\approx\mathbf q$ fails in $B_0$.

\textbf{Case~3.} $x,y\not\in R(\mathbf p)$. Since $x\stackrel{\mathbf p}{\rightarrow}y$, there exists a word $\mathbf u_1x\mathbf u_2y\mathbf u_3\ge_{\Sigma}\mathbf p$. If $\mathbf u_2$ contains at least one letter $z$ which is rare in $\mathbf p$ then $x\stackrel{\mathbf p}{\rightarrow}z\stackrel{\mathbf p}{\rightarrow}y$. Since Case~2 has been already considered, we have $x\stackrel{\mathbf q}{\rightarrow}z\stackrel{\mathbf q}{\rightarrow}y$. Now we can assume that $c(\mathbf u_2)\cap R(\mathbf p)=\varnothing$. There are three subcases.

\textbf{Subcase~3.1.} $c(\mathbf u_1)\cap R(\mathbf p)=\varnothing$. By Lemma~\ref{max chain}, the set $c(\mathbf u_1x\mathbf u_2y\mathbf u_3)\cap R(\mathbf p)$ is a maximal chain in $(R(\mathbf p),{\stackrel{\mathbf p}{\rightarrow}}|_{R(\mathbf p)})$. In particular, $c(\mathbf u_1x\mathbf u_2y\mathbf u_3)\cap R(\mathbf p)\neq\varnothing$. Hence $c(\mathbf u_3)\cap R(\mathbf p)\neq\varnothing$. Let $z$ be the first letter in $\mathbf u_3$ which is rare in $\mathbf p$, so $\mathbf u_3=\mathbf v_1z\mathbf v_2$ where $c(\mathbf v_1)\cap R(\mathbf p)=\varnothing$. Now we have $\mathbf p\le_{\Sigma}\mathbf u_1x\mathbf u_2y\mathbf v_1z\mathbf v_2$ where $c(\mathbf u_1x\mathbf u_2y\mathbf v_1)\cap R(\mathbf p)=\varnothing$. In particular, $z$ is the minimal element of the chain $c(\mathbf u_1x\mathbf u_2y\mathbf u_3)\cap R(\mathbf p)$. Since this chain is maximal, $z$ is a minimal element of the ordered set $(R(\mathbf p),{\stackrel{\mathbf p}{\rightarrow}}|_{R(\mathbf p)})$. We have $x\stackrel{\mathbf p}{\rightarrow}z$ and $y\stackrel{\mathbf p}{\rightarrow}z$. By Case~2, this implies $x\stackrel{\mathbf q}{\rightarrow}z$ and $y\stackrel{\mathbf q}{\rightarrow}z$. Hence $\mathbf q\le_{\Sigma}\mathbf w_1x\mathbf w_2z\mathbf w_3$ and $\mathbf q\le_{\Sigma}\mathbf s_1y\mathbf s_2z\mathbf s_3$. By Case~1, the ordered sets $(R(\mathbf p),{\stackrel{\mathbf p}{\rightarrow}}|_{R(\mathbf p)})$ and $(R(\mathbf q),{\stackrel{\mathbf q}{\rightarrow}}|_{R(\mathbf q)})$ coincide. Hence $z$ is a minimal element of $(R(\mathbf q),{\stackrel{\mathbf q}{\rightarrow}}|_{R(\mathbf q)})$. Hence $c(\mathbf w_1x\mathbf w_2)\cap R(\mathbf q)=c(\mathbf s_1y\mathbf s_2)\cap R(\mathbf q)=\varnothing$. Now Lemma~\ref{squaring lemma} implies $\mathbf q\le_{\Sigma}(\mathbf w_1x\mathbf w_2)^{(2)}z\mathbf w_3=\mathbf w_1^{(2)}x^2\mathbf w_2^{(2)}z\mathbf w_3$ and $\mathbf q\le_{\Sigma}(\mathbf s_1y\mathbf s_2)^{(2)}z\mathbf s_3=\mathbf s_1^{(2)}y^2\mathbf s_2^{(2)}z\mathbf s_3$. Therefore,
\begin{align*}
\mathbf q&\le_{\Sigma}\mathbf w_1^{(2)}x^2\mathbf w_2^{(2)}z\mathbf w_3+\mathbf s_1^{(2)}y^2\mathbf s_2^{(2)}z\mathbf s_3&&\text{by (\ref{adding inequalities})}\\
&\le_{\Sigma}x^2z\mathbf w_3+y^2z\mathbf s_3&&\text{by (\ref{deleting squares})}\\
&\le_{\Sigma}x^2z\mathbf w_3+y^2z\mathbf w_3&&\text{by (\ref{crossing}) and (\ref{adding inequalities})}\\
&\approx_{\Sigma}(x^2+y^2)z\mathbf w_3&&\text{the distributive law}\\
&\approx_{\Sigma}x^2y^2z\mathbf w_3.&&\text{by (\ref{square summand}) and (\ref{periodicity})}
\end{align*}
Therefore, $x\stackrel{\mathbf q}{\rightarrow}y$.

\textbf{Subcase~3.2.} $c(\mathbf u_3)\cap R(\mathbf p)=\varnothing$. This subcase is dual to the previous one.

\textbf{Subcase~3.3.} $c(\mathbf u_1)\cap R(\mathbf p)\neq\varnothing$ and $c(\mathbf u_3)\cap R(\mathbf p)\neq\varnothing$. Let $z$ be the last letter in $\mathbf u_1$ which is rare in $\mathbf p$. Let $t$ be the first letter in $\mathbf u_3$ which is rare in $\mathbf p$. We have $\mathbf u_1=\mathbf v_1z\mathbf v_2$ and $\mathbf u_3=\mathbf w_1t\mathbf w_2$ where $c(\mathbf v_2)\cap R(\mathbf p)=c(\mathbf w_1)\cap R(\mathbf p)=\varnothing$. Hence $\mathbf p\le_{\Sigma}\mathbf v_1z\mathbf v_2x\mathbf u_2y\mathbf w_1t\mathbf w_2$. Therefore, $z\stackrel{\mathbf p}{\rightarrow}x\stackrel{\mathbf p}{\rightarrow}y\stackrel{\mathbf p}{\rightarrow}t$. By Case~2, this implies $z\stackrel{\mathbf q}{\rightarrow}x\stackrel{\mathbf q}{\rightarrow}t$ and $z\stackrel{\mathbf q}{\rightarrow}y\stackrel{\mathbf q}{\rightarrow}t$. Hence there exist $\mathbf r_1z\mathbf r_2x\mathbf r_3$, $\mathbf r_4x\mathbf r_5t\mathbf r_6$, $\mathbf s_1z\mathbf s_2y\mathbf s_3$, $\mathbf s_4y\mathbf s_5t\mathbf s_6\ge_{\Sigma}\mathbf q$. By Lemma~\ref{max chain}, the set $c(\mathbf v_1z\mathbf v_2x\mathbf u_2y\mathbf w_1t\mathbf w_2)\cap R(\mathbf p)$ is a maximal chain in $(R(\mathbf p),{\stackrel{\mathbf p}{\rightarrow}}|_{R(\mathbf p)})$. Since $\mathbf v_2$, $\mathbf u_2$, and $\mathbf w_1$ do not contain letters from $R(\mathbf p)$, $t$ covers $z$ in this chain. Since the chain is maximal, $t$ covers $z$ in $(R(\mathbf p),{\stackrel{\mathbf p}{\rightarrow}}|_{R(\mathbf p)})$. Therefore, by Case~1, $t$ covers $z$ in $(R(\mathbf q),{\stackrel{\mathbf q}{\rightarrow}}|_{R(\mathbf q)})$. Hence the words $\mathbf r_2,\mathbf r_5,\mathbf s_2,\mathbf s_5$ do not contain letters from $R(\mathbf q)$. By Lemma~\ref{squaring lemma}, we have $\mathbf q\le_{\Sigma}\mathbf r_1z\mathbf r_2^{(2)}x^2\mathbf r_3,\mathbf r_4x^2\mathbf r_5^{(2)}t\mathbf r_6,\mathbf s_1z\mathbf s_2^{(2)}y^2\mathbf s_3,\mathbf s_4y^2\mathbf s_5^{(2)}t\mathbf s_6$. Hence
\begin{align*}
\mathbf q&\le_{\Sigma}\mathbf r_1z\mathbf r_2^{(2)}x^2\mathbf r_3+\mathbf r_4x^2\mathbf r_5^{(2)}t\mathbf r_6\\
&+\mathbf s_1z\mathbf s_2^{(2)}y^2\mathbf s_3+\mathbf s_4y^2\mathbf s_5^{(2)}t\mathbf s_6&&\text{by (\ref{adding inequalities})}\\
&\le_{\Sigma}\mathbf r_1zx^2\mathbf r_3+\mathbf r_4x^2t\mathbf r_6+\mathbf s_1zy^2\mathbf s_3+\mathbf s_4y^2t\mathbf s_6&&\text{by (\ref{deleting squares})}\\
&\le_{\Sigma}\mathbf r_1zx^2t\mathbf r_6+\mathbf s_1zy^2t\mathbf s_6&&\text{by (\ref{crossing})}\\
&\le_{\Sigma}\mathbf r_1zx^2t\mathbf r_6+\mathbf r_1zy^2t\mathbf s_6&&\text{by (\ref{crossing}) and (\ref{adding inequalities})}\\
&\le_{\Sigma}\mathbf r_1zx^2t\mathbf r_6+\mathbf r_1zy^2t\mathbf r_6&&\text{by (\ref{crossing}) and (\ref{adding inequalities})}\\
&\approx_{\Sigma}\mathbf r_1z(x^2+y^2)t\mathbf r_6&&\text{the distributive law}\\
&\approx_{\Sigma}\mathbf r_1zx^2y^2t\mathbf r_6.&&\text{by (\ref{square summand}) and (\ref{periodicity})}
\end{align*}
Therefore, $x\stackrel{\mathbf q}{\rightarrow}y$.
\end{proof} 

\begin{lemma}
\label{final} Each identity that satisfies the conditions \textup{1)}, \textup{2)}, and \textup{3)} of Lem\-ma~\ref{isomorphism} follows from $\Sigma$.
\end{lemma}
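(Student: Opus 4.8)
The plan is to attach to every polynomial $\mathbf p$ a canonical form $\hat{\mathbf p}$ that depends only on the three invariants $c(\mathbf p)$, $R(\mathbf p)$ and $\stackrel{\mathbf p}{\rightarrow}$, and to prove that $\mathbf p\approx_\Sigma\hat{\mathbf p}$ always holds. Once this is established the lemma is immediate: an identity $\mathbf p\approx\mathbf q$ satisfying conditions 1)--3) of Lemma~\ref{isomorphism} has $c(\mathbf p)=c(\mathbf q)$, $R(\mathbf p)=R(\mathbf q)$ and $\stackrel{\mathbf p}{\rightarrow}={\stackrel{\mathbf q}{\rightarrow}}$, so $\mathbf p$ and $\mathbf q$ produce literally the same canonical form $\hat{\mathbf p}=\hat{\mathbf q}$, whence $\mathbf p\approx_\Sigma\hat{\mathbf p}=\hat{\mathbf q}\approx_\Sigma\mathbf q$.

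I would build $\hat{\mathbf p}$ as a sum $\hat{\mathbf p}=\sum_C\mathbf w_C$ over all maximal chains $C=(z_1\stackrel{\mathbf p}{\rightarrow}\cdots\stackrel{\mathbf p}{\rightarrow}z_k)$ of the ordered set $(R(\mathbf p),{\stackrel{\mathbf p}{\rightarrow}}|_{R(\mathbf p)})$; for a degenerate $\mathbf p$ there is a single empty chain and $\hat{\mathbf p}$ is simply the product of the squares of all letters taken in a fixed order. The word $\mathbf w_C$ contains each rare letter of $C$ once, in the chain order, and each non-rare letter $a$ that is $\stackrel{\mathbf p}{\rightarrow}$-comparable with \emph{every} element of $C$ exactly twice, as $a^2$. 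What makes this well defined is that two letters can occur in one word above $\mathbf p$ only when they are comparable, so only such $a$ may accompany $C$; moreover, by Lemma~\ref{order} a non-rare $a$ cannot satisfy both $z_i\stackrel{\mathbf p}{\rightarrow}a$ and $a\stackrel{\mathbf p}{\rightarrow}z_i$, so $a$ is forced into the unique gap after the largest $z_i$ with $z_i\stackrel{\mathbf p}{\rightarrow}a$ and before the smallest $z_j$ with $a\stackrel{\mathbf p}{\rightarrow}z_j$. The non-rare letters inside one gap commute by~(\ref{semigroup identities}), so fixing an order among them determines $\mathbf w_C$ entirely from the invariants.

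The equivalence $\mathbf p\approx_\Sigma\hat{\mathbf p}$ splits into two inequalities. For $\mathbf p\le_\Sigma\hat{\mathbf p}$, fix a maximal chain $C$: splicing the witnesses of $z_i\stackrel{\mathbf p}{\rightarrow}z_{i+1}$ by repeated use of~(\ref{crossing}) (exactly as in the transitivity argument of Lemma~\ref{order}) yields a word $\mathbf w\ge_\Sigma\mathbf p$ carrying $z_1,\dots,z_k$ in order, and maximality of $C$ forces $R(\mathbf p)\cap c(\mathbf w)=C$ by Lemma~\ref{max chain}; Lemma~\ref{squaring lemma} then squares every non-rare letter of $\mathbf w$, and further applications of~(\ref{crossing}) insert the missing comparable non-rare squares and slide all non-rare squares into their canonical gaps, giving $\mathbf p\le_\Sigma\mathbf w_C$ and hence $\mathbf p\le_\Sigma\hat{\mathbf p}$ by~(\ref{adding inequalities}). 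For the reverse inequality it suffices to prove $\hat{\mathbf p}\le_\Sigma\mathbf v$ for each word $\mathbf v$ of $\mathbf p$; putting $C=R(\mathbf p)\cap c(\mathbf v)$ (a maximal chain by Lemma~\ref{max chain}) and using $\hat{\mathbf p}\le_\Sigma\mathbf w_C$, this reduces to $\mathbf w_C\le_\Sigma\mathbf v$. Here the relative order of each non-rare letter present in $\mathbf v$ and each rare letter of $C$ already matches $\mathbf w_C$ (comparability is one-directional by Lemma~\ref{order}), and each such non-rare letter occupies a single gap, so $\mathbf w_C\le_\Sigma\mathbf v$ follows from elementary decreasing moves: squaring a non-rare occurrence and merging duplicates within a gap via~(\ref{squaring eq}), and inserting a missing square $a^2$ into its gap via~(\ref{deleting squares}).

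The main obstacle is the upward step $\mathbf p\le_\Sigma\mathbf w_C$: one must manufacture, purely from the relation $\stackrel{\mathbf p}{\rightarrow}$, a single word above $\mathbf p$ realizing a prescribed maximal chain \emph{together} with all required squared non-rare letters in their forced positions, and grafting each such letter into the correct gap from its own witnessing word demands a careful chain of~(\ref{crossing})-applications that leaves the rare skeleton intact. The secondary difficulty is the bookkeeping in the downward step---checking the small absorption identities in every positional case (a letter at the extreme left or right of a gap, strictly inside a gap, or adjacent to a chain endpoint)---but each such case is a routine consequence of~(\ref{squaring eq})--(\ref{crossing}).
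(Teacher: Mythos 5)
Your canonical-form strategy is a genuine alternative to the paper's argument, and several parts of it are sound: the well-definedness of the gaps (via Lemma~\ref{order}), the downward half $\mathbf w_C\le_\Sigma\mathbf v$ (squaring occurrences, inserting squares by~(\ref{deleting squares}), merging and permuting squares by~(\ref{semigroup identities}) and~(\ref{periodicity}) are all legitimate word-level moves). But the step you yourself flag as the main obstacle, $\mathbf p\le_\Sigma\mathbf w_C$, has a genuine gap: the tool you assign to it, ``a careful chain of~(\ref{crossing})-applications,'' cannot perform it. Identity~(\ref{crossing}) always outputs a prefix of one word glued at a shared letter to a suffix of another; it \emph{replaces} the content of a gap, it never \emph{accumulates} the contents of two gaps into one. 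Concretely, take $\mathbf p=za^2t+zb^2t$. Here $R(\mathbf p)=\{z,t\}$, the unique maximal chain is $C=\{z\stackrel{\mathbf p}{\rightarrow}t\}$, both $a$ and $b$ lie in the single interior gap, so $\mathbf w_C=za^2b^2t$. Legitimate witnesses for all the relations $z\stackrel{\mathbf p}{\rightarrow}a\stackrel{\mathbf p}{\rightarrow}t$ and $z\stackrel{\mathbf p}{\rightarrow}b\stackrel{\mathbf p}{\rightarrow}t$ are the two summands themselves, and every word obtainable from them by repeated applications of~(\ref{crossing}) (and of Lemma~\ref{squaring lemma}) has the form $za^it$ or $zb^jt$: no sequence of crossings ever produces a word containing both $a$ and $b$, because each crossing discards one of the two gap contents. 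So the grafting procedure you describe fails at the very first insertion, and since your proof must work with arbitrary witnesses, it cannot be salvaged by choosing better ones without circularity.

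The missing idea is additive merging, which is not a crossing phenomenon at all: once crossing has arranged a common prefix and suffix, say $\mathbf p\le_\Sigma Pz_iD_1z_{i+1}S$ and $\mathbf p\le_\Sigma Pz_iD_2z_{i+1}S$ with $D_1,D_2$ products of squares, one uses~(\ref{adding inequalities}), the distributive laws, and then~(\ref{square summand}) together with~(\ref{periodicity}) and~(\ref{semigroup identities}) to get $D_1+D_2\approx_\Sigma D_1D_2$, hence $\mathbf p\le_\Sigma Pz_iD_1D_2z_{i+1}S$; alternatively one empties a gap by~(\ref{deleting squares}) and refills it by~(\ref{squaring eq}). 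This is exactly the pattern the paper itself uses in Case~3 of Lemma~\ref{max chain} and in Subcases~3.1 and~3.3 of Lemma~\ref{isomorphism}, and with it your gap-filling (including the degenerate case) does go through, so the canonical-form route is repairable. It is worth seeing why the paper's own proof of Lemma~\ref{final} never meets this difficulty: rather than building one word containing everything, it fixes a summand $\mathbf w$ of $\mathbf p$ and proves $\mathbf q\le_\Sigma\mathbf w$ by induction on prefixes of $\mathbf w$, each step performing a single crossing and throwing the suffix away, then erasing the leftover non-rare tail by Lemma~\ref{squaring lemma} and~(\ref{deleting squares}). Because the target word is prescribed in advance, nothing ever needs to be inserted, which is precisely why crossing suffices there but not in your scheme.
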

\begin{proof}
Consider an identity $\mathbf p\approx\mathbf q$ that satisfies the conditions. Take an arbitrary word $\mathbf w$ in $\mathbf p$. Let $\mathbf w_1$ be a prefix of $\mathbf w$, so that $\mathbf w=\mathbf w_1\mathbf w_2$. We will prove that there exists a word $\mathbf w_1\mathbf w'_2\ge_{\Sigma}\mathbf q$ for some $\mathbf w'_2$. We use induction on the length of $\mathbf w_1$.

\textbf{Induction base}: $\mathbf w_1=x$ where $x$ is a letter. Since $c(\mathbf p)=c(\mathbf q)$, the polynomial $\mathbf q$ contains a word $\mathbf u_1x\mathbf u_2$. Suppose $\mathbf p$ is non-degenerate. If $y$ is a letter in $\mathbf u_1$ then $y\stackrel{\mathbf q}{\rightarrow}x$ whence $y\stackrel{\mathbf p}{\rightarrow}x$. Since $x\mathbf w_2\ge_{\Sigma}\mathbf p$, Lemma~\ref{max chain} implies that the set $c(x\mathbf w_2)\cap R(\mathbf p)$ is a maximal chain in $(R(\mathbf p),{\stackrel{\mathbf p}{\rightarrow}}|_{R(\mathbf p)})$ and $x\stackrel{\mathbf p}{\rightarrow}z$ where $z$ is the minimal element of this chain. Hence $y\stackrel{\mathbf p}{\rightarrow}z$. Hence $y\not\in R(\mathbf p)$. If $\mathbf p$ is degenerate then the conclusion $y\not\in R(\mathbf p)$ is obvious. This holds for each letter $y$ in $\mathbf u_1$ whence $\mathbf q\le_{\Sigma}\mathbf u_1^{(2)}x\mathbf u_2\le_{\Sigma}x\mathbf u_2$ by Lemma~\ref{squaring lemma} and (\ref{deleting squares}).

\textbf{Induction step}: $\mathbf w_1=\mathbf w'_1x$ where $x$ is a letter and $\mathbf q\le_{\Sigma}\mathbf w'_1\mathbf w'_2$. Let $y$ be the last letter in $\mathbf w'_1$, so that $\mathbf w'_1=\mathbf w''_1y$. We have $y\stackrel{\mathbf p}{\rightarrow}x$ whence $y\stackrel{\mathbf q}{\rightarrow}x$. Hence there exists $\mathbf v_1y\mathbf v_2x\mathbf v_3\ge_{\Sigma}\mathbf q$. Let $z$ be a letter in $\mathbf v_2$. We have $y\stackrel{\mathbf q}{\rightarrow}z\stackrel{\mathbf q}{\rightarrow}x$ whence $y\stackrel{\mathbf p}{\rightarrow}z\stackrel{\mathbf p}{\rightarrow}x$. Suppose $\mathbf p$ is non-degenerate. Since $\mathbf w=\mathbf w''_1yx\mathbf w_2\ge_{\Sigma}\mathbf p$, Lemma~\ref{max chain} implies that $c(\mathbf w''_1yx\mathbf w_2)\cap R(\mathbf p)$ is a maximal chain in $(R(\mathbf p),{\stackrel{\mathbf p}{\rightarrow}}|_{R(\mathbf p)})$. Therefore, the set $(c(\mathbf w''_1yx\mathbf w_2)\cap R(\mathbf p))\cup\{z\}$ is not a chain in $(R(\mathbf p),{\stackrel{\mathbf p}{\rightarrow}}|_{R(\mathbf p)})$ whence $z\not\in R(\mathbf p)=R(\mathbf q)$. If $\mathbf p$ is degenerate then the conclusion $z\not\in R(\mathbf p)=R(\mathbf q)$ is obvious. This holds for each letter $z$ in $\mathbf v_2$. Hence, by Lemma~\ref{squaring lemma}, we have $\mathbf q\le_{\Sigma}\mathbf v_1y\mathbf v_2^{(2)}x\mathbf v_3$. Therefore,
\begin{align*}
\mathbf q&\le_{\Sigma}\mathbf w'_1\mathbf w'_2+\mathbf v_1y\mathbf v_2^{(2)}x\mathbf v_3&&\text{by (\ref{adding inequalities})}\\
&=\mathbf w''_1y\mathbf w'_2+\mathbf v_1y\mathbf v_2^{(2)}x\mathbf v_3\\
&\le_{\Sigma}\mathbf w''_1y\mathbf w'_2+\mathbf v_1yx\mathbf v_3&&\text{by (\ref{deleting squares})}\\
&\le_{\Sigma}\mathbf w''_1yx\mathbf v_3.&&\text{by (\ref{crossing})}
\end{align*}
The induction step is finished.

Applying the statement for the case $\mathbf w_1=\mathbf w$, we obtain that $\mathbf q\le_{\Sigma}\mathbf w\mathbf w'$. If $\mathbf p$ is non-degenerate then, by Lemma~\ref{max chain}, the set $c(\mathbf w)\cap R(\mathbf p)$ is a maximal chain in the ordered set $(R(\mathbf p),{\stackrel{\mathbf p}{\rightarrow}}|_{R(\mathbf p)})=(R(\mathbf q),{\stackrel{\mathbf q}{\rightarrow}}|_{R(\mathbf q)})$ whence $c(\mathbf w')\cap R(\mathbf q)=\varnothing$. If $\mathbf p$ is degenerate then the conclusion $c(\mathbf w')\cap R(\mathbf q)=\varnothing$ is obvious. Hence $\mathbf q\le_{\Sigma}\mathbf w\mathbf {w'}^{(2)}\le_{\Sigma}\mathbf w$ by Lemma~\ref{squaring lemma} and (\ref{deleting squares}). We have $\mathbf q\le_{\Sigma}\mathbf w$ for each summand $\mathbf w$ in $\mathbf p$. By (\ref{adding inequalities}) this implies $\mathbf q\le_{\Sigma}\mathbf p$. The same argument shows that $\mathbf p\le_{\Sigma}\mathbf q$ whence $\mathbf p\approx_{\Sigma}\mathbf q$.
\end{proof}

\emph{Proof of Theorem}~\ref{main}. The theorem immediately follows from Lemmas~\ref{sigma holds in b0}, \ref{isomorphism}, and \ref{final}.\qed

\section*{Acknowledgements}

The author thanks Prof. Mikhail Volkov for his remarks on the text.

\end{document}